\newtheorem{thm}{Theorem}[section]
\newtheorem{cor}[thm]{Corollary}
\newtheorem{lem}[thm]{Lemma}
\newtheorem{prop}[thm]{Proposition}
\newtheorem*{thmA}{Theorem A}
\newtheorem*{thmB}{Theorem B}
\newtheorem*{thmC}{Theorem C}
\theoremstyle{definition}
\theoremstyle{remark}
\newtheorem*{rem}{Remark}
\numberwithin{equation}{section}
\newcommand{\R}{\mathbb R}
\def\C{\mathbb{C}}
\def\Rc{\mathcal{R}}
\def\Vc{\mathcal{V}}
\def\Z{\mathbb{Z}}
\def\0{\varnothing}
\def\1{\mathbb{I}}
\def\sm{\setminus}
\def\ol{\overline}
\def\Vol{\mathrm{Vol}}
\def\Int{\mathrm{Int}}
\def\vrt{\mathrm{Vert}}
\newcommand\pd[2]{\frac{\partial #1}{\partial #2}}
\renewcommand\le{\leqslant}
\renewcommand\ge{\geqslant}
\begin{document}

\title[On the theory of coconvex bodies]
{On the theory of coconvex bodies}
\author{Askold Khovanski\u{\i}}
\author{Vladlen Timorin}

\address[Askold Khovanskii]
{Department of Mathematics, University of Toronto, Toronto,
Canada; Moscow Independent University; Institute for Systems Analysis,
Russian Academy of Sciences.
}

\email{askold@math.toronto.edu}

\address[Vladlen Timorin]{Faculty of Mathematics and Laboratory of Algebraic Geometry,
National Research University Higher School of Economics,
7 Vavilova St 117312 Moscow, Russia;
Independent University of Moscow,
Bolshoy Vlasyevskiy Pereulok 11, 119002 Moscow, Russia}

\email{vtimorin@hse.ru}

\thanks{
The first named author was partially supported by Canadian Grant \textrm{N~156833-02}.
The second named author was partially supported by
the Dynasty Foundation grant, RFBR grants 11-01-00654-a, 12-01-33020, 13-01-12449,
and AG Laboratory NRU HSE, MESRF grant ag. 11 11.G34.31.0023
}


\begin{abstract}
If the complement of a closed convex set in a closed convex cone
is bounded, then this complement minus the apex of the cone is called
a coconvex set.
Coconvex sets appear in singularity theory (they are
closely related to Newton diagrams) and in commutative algebra.
Such invariants of coconvex sets as volumes, mixed volumes, number
of integer points, etc., play an important role.
This paper aims at extending various results from the theory of convex
bodies to the coconvex setting.
These include the Aleksandrov--Fenchel inequality and the Ehrhart duality.
\end{abstract}
\maketitle

\section{Introduction}

Geometric study of coconvex bodies is motivated by singularity theory.
The connections between coconvex geometry and singularity theory are similar
to the connections between convex geometry and algebraic geometry.
Many local phenomena studied by singularity theory are local manifestations
of global algebraic geometry phenomena.
Thus it would be natural to expect that many properties of coconvex bodies
are manifestations of properties of convex bodies.
In this paper, we prove a number of results of this spirit.

In the first subsection of the introduction, we briefly overview the
connections of convex geometry with algebraic geometry, of algebraic
geometry with singularity theory and, finally, of singularity theory with
coconvex geometry. 
These were the main motivations of the authors, however, neither
algebraic geometry, nor singularity theory appear later in the text.
Thus the following subsection can be omitted.

\subsection{Overview}
The theory of Newton polytopes founded in 1970s revealed unexpected connections between
algebraic geometry and convex geometry.
These connections turned out to be useful for both fields.
According to a theorem of Kouchnirenko and Bernstein \cite{Ko,B}, the number of solutions
of a polynomial system $P_1=\cdots=P_n=0$ in $(\C\sm\{0\})^n$ equals the mixed
volume of the corresponding Newton polytopes $\Delta_1$, $\dots$, $\Delta_n$.
The relationship between Algebra and Geometry contained in this result allowed to
prove the Aleksandrov--Fenchel inequalities using transparent
and intuitive algebraic geometry considerations \cite{Kh,T}, to find
previously unknown analogs of the Aleksandrov--Fenchel inequalities in algebraic geometry
\cite{KKa}, to find convex-geometric versions of the
Hodge--Riemann relations that generalize the polytopal Aleksandrov--Fenchel
inequalities \cite{Mc,VT}.
The number of integer points in polytopes is a classical object of study in
geometry and combinatorics.
The relationship with algebraic geometry has enriched this area with explicit
formulas of Riemann-Roch type \cite{KPb}, has allowed to connect the Ehrhart duality
with the Serre duality (from topology of algebraic varieties).
The integration with respect to the Euler characteristic, inspired by the
connections with algebraic geometry, has allowed to find a better viewpoint
on classical results of P. McMullen, simplify and considerably generalize them
\cite{KPa,KPb}.
Connections with algebraic geometry led to important results in combinatorics
of simple (and non-simple) convex polytopes, and had many more follow-ups.

The theory of Newton polytopes has a local version, which studies singularities
of sufficiently generic polynomials with given \emph{Newton diagrams} at the origin.
This theory connects singularity theory with somewhat unusual geometric objects,
namely, Newton diagrams.
A Newton diagram is the union of all compact faces of an unbounded convex polyhedron
lying in a convex cone (which in this case coincides with the positive coordinate orthant)
and coinciding with the cone sufficiently far from the origin.
The complement in the cone of the given unbounded convex polyhedron is, in
our terminology, a \emph{coconvex body} (except that it is also convenient
to remove the apex of the cone from the coconvex body for reasons that will
become clear later).
Computations of local invariants in algebraic geometry and singularity theory
have persistently led to volumes (and mixed volumes) of coconvex bodies, the number of integer points
in coconvex bodies, etc.
Computation of local invariants is often reduced to computation of global invariants.
Let us illustrate this effect on the following toy problem: compute the multiplicity of
the zero root of a polynomial $P(z)=a_kz^k+\cdots+a_nz^n$ with $a_k\ne 0$ and $a_n\ne 0$.
The multiplicity of the zero root of the polynomial $P$ equals the number of
nonzero roots of a polynomial $P_\varepsilon=P+\varepsilon$ that vanish (i.e., tend to $0$) as $\varepsilon\to 0$.
The Newton polytopes of the polynomials $P_\varepsilon$ ($\varepsilon\ne 0$) and $P$
are, respectively, the intervals $[0,n]$ and $[k,n]$.
The lengths $n$ and $n-k$ of these intervals are equal to the number of nonzero roots of
the polynomials $P_\varepsilon$ and $P$ (the answers in global problems).
It follows that exactly $k=n-(n-k)$ solutions vanish as $\varepsilon\to 0$.
Thus, in this simplest case, the multiplicity $\mu=k$ of the root 0 (the local invariant)
equals the difference of two global invariants, namely, the lengths $n$ and $n-k$ of the
intervals $[0,n]$ and $[k,n]$.

Similarly to this simple example, many questions of singularity theory (local questions)
reduce to questions of algebraic geometry (global questions).
Computing various local invariants for generic collections of functions with
given Newton diagrams reduces to computing global algebro-geometric invariants
for collections of generic polynomials with given Newton polytopes.

A systematic development of the coconvex bodies theory became a pressing need when,
several years ago, relationships between convex and coconvex geometry on one side,
algebraic geometry and singularity theory on the other side, were found that are far
more general than relationships based on Newton diagrams and Newton polytopes.
For example, these relationships have allowed Kaveh and Khovanskii
\cite{KK} to deduce non-trivial commutative algebra
inequalities from a version of the Brunn--Minkowski inequality for coconvex bodies
(this version follows from Theorem A).
Computing Hilbert polynomials of algebraic varieties and their local versions for
algebraic singularities leads to problems of counting integer points in lattice
convex and coconvex polytopes.

\subsection{Terminology and notation}
We start by recalling some terminology from convex geometry.
The \emph{Minkowski sum} of two convex sets $A$, $B\subset \R^d$ is defined as
$A+B=\{a+b\,|\, a\in A,\ b\in B\}$.
For a positive real number $\lambda$, we let $\lambda A$ denote the
set $\{\lambda a\,|\, a\in A\}$.
By definition, a \emph{convex body} is a compact convex set, whose interior
is nonempty.

Let $C\subset\R^d$ be a closed strictly convex cone with the apex at $0$
and a nonempty interior.
Consider a closed convex subset $\Delta\subset C$ such that $C\sm\Delta$
is bounded and nonempty.
Then the set $A=C\sm(\Delta\cup\{0\})$ is called a \emph{coconvex body}.
When we talk about volumes, we may replace $A$ with its closure $\ol A$.
However, for the discussion of integer points in coconvex bodies, the
distinction between $A$ and $\ol A$ becomes important.
If $A$ and $B$ are coconvex sets with respect to the same cone $C$,
then we can define $A\oplus B$ as $C\sm((\Delta_A+\Delta_B)\cup\{0\})$,
where $\Delta_A$ and $\Delta_B$ are the unbounded components of $C\sm A$
and $C\sm B$, repsectively.
It is clear that any $C$-coconvex set can be represented as
a set-theoretic difference of two convex bounded sets.
This representation allows to carry over a number of results concerning
the convex bodies with the operation $+$ to the coconvex
bodies with the operation $\oplus$.

In this paper, we describe several results of this type.
Although the reduction from the ``convex world'' to the ``coconvex world'' is always
simple and sometimes straightforward, the results obtained with the
help of it are interesting because, firstly, they are related
(through Newton diagrams) with singularity theory \cite{AVG,KV} and
commutative algebra \cite{KK}, and, secondly, they are intrinsic, i.e., do not
depend on a particular representation of a coconvex set as a difference of two convex sets.
Coconvex Aleksandrov--Fenchel inequalities also appeared in \cite{Fi}
in the context of Fuchsian groups.
This paper extends the earlier very short preprint \cite{KT} of the authors,
in which just the coconvex Aleksandrov--Fenchel inequality has been discussed.

\subsection{Aleksandrov--Fenchel inequalities}
A \emph{linear family of convex bodies} is a collection of the following
objects: a real vector space $\Vc$, an open subset $\Omega\subset\Vc$,
a map $f$ from $\Omega$ to the set of all convex bodies in $\R^d$ such that
$$
f(\lambda_1 v_1+\dots+\lambda_n v_n)=\lambda_1 f(v_1)+\dots +\lambda_n f(v_n)
$$
whenever all $v_i\in\Omega$,
all $\lambda_i$ are positive, and $\lambda_1 v_1+\cdots+ \lambda_n v_n\in\Omega$.
A linear family of convex bodies \emph{with $m$ marked points} is a linear family $(\Vc,\Omega,f)$ of convex bodies, in which some $m$ elements of $\Omega$ are marked.

With every linear family $\alpha=(\Vc,\Omega,f)$ of convex bodies, we associate
the \emph{volume polynomial} $\Vol_\alpha$ on $\Vc$ as follows.
For $v\in\Omega$, we define $\Vol_\alpha(v)$ as the usual $d$-dimensional
volume of the convex body $f(v)$.
It is well known that the function $\Vol_\alpha$ thus defined extends
to a unique polynomial on $\Vc$ that is homogeneous of degree $d$.
For $v\in\Vc$, we let $L_v$ denote the usual directional (Lie) derivative along $v$.
Thus $L_v$ is a differential operator that acts on functions, in particular,
degree $k$ polynomials on $\Vc$ are mapped by this operator to degree $k-1$
polynomials.
If $\alpha=(\Vc,\Omega,f)$ is a linear family
of convex bodies with $(d-2)$ marked points $v_1$, $\dots$, $v_{d-2}\in\Omega$,
then we define the \emph{Aleksandrov--Fenchel} symmetric bilinear form $B_\alpha$ on $\Vc$
by the formula
$$
B_\alpha(u_1,u_2)=\frac 1{d!}L_{u_1}L_{u_2}L_{v_1}\dots L_{v_{d-2}}(\Vol_\alpha).
$$
Note that the expression in the right-hand side is a real number
(called the \emph{mixed volume} of the convex bodies $f(u_1)$, $f(u_2)$, $f(v_1)$, $\dots$,
$f(v_{d-2})$ provided that $u_1$, $u_2\in\Omega$).
Indeed, this is the result of the action of a homogeneous degree $d$ differential operator
with constant coefficients on a homogeneous degree $d$ polynomial.
The corresponding quadratic form $Q_\alpha(u)=B_\alpha(u,u)$ is given by
the formula $Q_\alpha=\frac 2{d!}L_{v_1}\dots L_{v_{d-2}}(\Vol_\alpha)$.
The expression in the right-hand side is the result of the action of
a degree $d-2$ homogeneous differential operator with constant coefficients on a homogeneous
degree $d$ polynomial, i.e., a quadratic form.
\emph{The Aleksandrov--Fenchel inequality} \cite{Al} states that, for all $u_1\in\Vc$ and
$u_2\in\Omega$, we have
$$
B_\alpha(u_1,u_2)^2\ge B_\alpha(u_1,u_1) B_\alpha(u_2,u_2).
$$
The Aleksandrov--Fenchel inequality is a far-reaching generalization of
the classical isoperimetric inequality. See \cite{Mc,VT} for generalizations
of the Aleksandrov--Fenchel inequality for convex polytopes.

The following are standard corollaries of the Aleksandrov--Fenchel inequality:

\noindent\emph{Brunn--Minkowski inequality:} the function $\Vol_\alpha^{\frac 1d}$ is
concave, i.e.
$$
(\Vol_\alpha(tu+(1-t)v))^\frac 1d\ge t\Vol_\alpha(u)^\frac 1d+(1-t)\Vol_\alpha(v)^\frac 1d,\quad t\in [0,1].
$$

\noindent\emph{Generalized Brunn--Minkowski inequality:} the function
$\left(L_{v_1}\dots L_{v_k}\Vol_\alpha\right)^\frac 1{d-k}$ is concave.

\noindent\emph{First Minkowski inequality:}
$$
\left(\frac 1{d!}L_{u}L_{v}^{d-1}(\Vol_\alpha)\right)^d\ge \Vol_\alpha(u)\Vol_\alpha(v)^{d-1}.
$$

\noindent\emph{Second Minkowski inequality:} if all marked points coincide with $u$, then
$$
B_\alpha(u,v)^2\ge \Vol_\alpha(u)\, B_\alpha(v,v)
$$

\subsection{Coconvex Aleksandrov--Fenchel inequalities}

Define a \emph{linear family of $C$-coconvex bodies} as a collection of
the following objects: a vector space $\Vc$,
an open subset $\Omega\subset\Vc$, a map $g$ from $\Omega$ to the set
of all $C$-coconvex bodies such that
$$
g(\lambda_1 v_1+\dots +\lambda_n v_n)=\lambda_1 g(v_1)\oplus\dots \oplus\lambda_n g(v_n)
$$
whenever all $v_i\in\Omega$, all $\lambda_i$ are positive, and
$\lambda_1 v_1+\dots +\lambda_n v_n\in\Omega$.
A linear family of $C$-coconvex bodies \emph{with $m$ marked points} is a
linear family $(\Vc,\Omega,g)$ of $C$-coconvex bodies,
in which some $m$ elements of $\Omega$ are marked.
With every linear family $\beta$ of $C$-coconvex bodies, we associate the
volume function $\Vol_\beta$ in the same way as with a linear family of convex bodies.
The function $\Vol_\beta$ thus defined is also a homogeneous degree $d$ polynomial
(we will prove this below).
Given a linear family $\beta$ of $C$-coconvex bodies with $d-2$ marked points
$v_1$, $\dots$, $v_{d-2}$, we define the \emph{coconvex Aleksandrov--Fenchel}
symmetric bilinear form as
$$
B_\beta^C(u_1,u_2)=\frac 1{d!}L_{u_1}L_{u_2}L_{v_1}\dots L_{v_{d-2}}(\Vol_\beta).
$$
We will also consider the corresponding quadratic form
$Q^C_\beta=\frac 2{d!}L_{v_1}\dots L_{v_{d-2}}(\Vol_\beta)$.
The following is one of our main results.

\begin{thmA}
  The form $Q_\beta^C$ is non-negative, i.e. $Q_\beta^C(u)\ge 0$
  for all $u\in\Vc$.
  In particular, the corresponding symmetric bilinear form
  satisfies the Cauchy--Schwarz inequality
  $$
  B_\beta^C(u_1,u_2)^2\le B_\beta^C(u_1,u_1) B_\beta^C(u_2,u_2).
  $$
\end{thmA}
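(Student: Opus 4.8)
The plan is to reduce Theorem~A to the classical (convex) Aleksandrov--Fenchel inequality by writing the coconvex bodies of the family as set-theoretic differences of convex bodies, one of which comes from an essentially one-parameter family. Fix a linear functional $\ell$ on $\R^d$ strictly positive on $C\sm\{0\}$, so that $C\cap\{\ell\le t\}$ is a convex body for $t>0$ and $C+C=C$ gives $(C\cap\{\ell\le a\})+(C\cap\{\ell\le b\})=C\cap\{\ell\le a+b\}$. Choose a linear functional $c$ on $\Vc$ with $c(v)>\sup_{x\in g(v)}\ell(x)$ on $\Omega$ (possibly after shrinking $\Omega$ to a nonempty open subset, or passing to coconvex polytopes, to make such a $c$ available), and put $T_v:=C\cap\{\ell\le c(v)\}$, a convex body containing $\ol{g(v)}$. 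By the displayed identity, $\alpha^+\colon v\mapsto T_v$ is a linear family of convex bodies; since every $T_v$ is a rescaling of one fixed body $T_*$, we get $\Vol_{\alpha^+}(v)=\vol(T_*)\,c(v)^d$.

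Let $\Delta_v$ be the unbounded component of $C\sm g(v)$. Then $\ol{g(v)}$ and $\Delta_v\cap T_v$ have disjoint interiors and union $T_v$, so $\Vol_\beta(v)=\Vol_{\alpha^+}(v)-\vol(\Delta_v\cap T_v)$. A short argument using $C+C=C$ and $\Delta_v+C=\Delta_v$ shows that
\[
(\lambda_1\Delta_{v_1}+\lambda_2\Delta_{v_2})\cap(\lambda_1T_{v_1}+\lambda_2T_{v_2})=\lambda_1(\Delta_{v_1}\cap T_{v_1})+\lambda_2(\Delta_{v_2}\cap T_{v_2}),\qquad \lambda_1+\lambda_2=1
\]
(the inclusion $\supseteq$ is convexity; for $\subseteq$ one rebalances a given convex combination so that each summand is pushed below its level $\ell=c(v_i)$, using that $\Delta_{v_i}\supseteq C\cap\{\ell\ge c(v_i)\}$). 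Hence $\alpha^-\colon v\mapsto\Delta_v\cap T_v$ is also a linear family of convex bodies, $\Vol_\beta=\Vol_{\alpha^+}-\Vol_{\alpha^-}$ as polynomials on $\Vc$, and so $Q^C_\beta=Q^+-Q^-$, where $Q^\pm$ are the Aleksandrov--Fenchel forms of $\alpha^\pm$ with the same marked points $v_1,\dots,v_{d-2}$. From $\Vol_{\alpha^+}(v)=\vol(T_*)c(v)^d$ one computes $Q^+=\kappa\,c(\cdot)^2$ with $\kappa=\vol(T_*)\,c(v_1)\cdots c(v_{d-2})>0$: a non-negative multiple of a perfect square.

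It remains to prove $\kappa c(\cdot)^2-Q^-\ge0$ on $\Vc$. Applying the classical Aleksandrov--Fenchel inequality to the convex family $\alpha^-$ shows $Q^-$ has at most one positive eigenvalue (Lorentzian type). If $Q^-(u)\le0$ then $Q^C_\beta(u)=\kappa c(u)^2-Q^-(u)\ge0$ immediately; if $Q^-(u)>0$, then $u$ lies in the forward or backward cone of $Q^-$, and since $Q^-$ and $c^2$ are even we may take $u$ in the forward cone $\mathcal C^+$. On $\Omega$ (indeed on the maximal domain $\widehat\Omega$ to which $v\mapsto(\Delta_v,T_v,\Delta_v\cap T_v)$ extends with $\Delta_v$ still a coconvex complement) one has $\Delta_v\cap T_v\subseteq T_v$, so monotonicity of mixed volumes gives $Q^-(v)\le Q^+(v)=\kappa c(v)^2$; in particular $Q^C_\beta\ge0$ on $\Omega$, and $\Omega\subseteq\mathcal C^+$ because $Q^-(v)$ is the (positive) mixed volume of the full-dimensional bodies $\Delta_v\cap T_v$, $\Delta_{v_i}\cap T_{v_i}$. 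If $\widehat\Omega\supseteq\mathcal C^+$ and $c>0$ on $\mathcal C^+$, positive homogeneity propagates $Q^-\le\kappa c^2$ from $\Omega$ to all of $\mathcal C^+$, finishing the proof; the Cauchy--Schwarz inequality for $B^C_\beta$ is then the usual one for the symmetric bilinear form of a positive semidefinite quadratic form.

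The step I expect to be the real obstacle is precisely this last propagation from $\Omega$ to the forward cone of $Q^-$: it cannot follow from the purely algebraic facts ``$Q^+$ a perfect square, $Q^-$ Lorentzian, $Q^C_\beta\ge0$ on an arbitrary open set'' alone, so one must either enlarge the family so that (the cone over) $\Omega$ exhausts that forward cone with $c$ positive there, or verify the comparison $Q^-\le\kappa c^2$ directly for coconvex polytopes and pass to the limit. By comparison, establishing that $\Vol_\beta$ is a genuine homogeneous polynomial (needed even to state $Q^C_\beta$) and producing the linear functional $c$ are routine preliminaries.
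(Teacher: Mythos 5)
The first part of your reduction is sound and runs close to the paper's: truncating $C$ by a level set of $\ell$, the identity $\Vol_\beta=\Vol_{\alpha^+}-\Vol_{\alpha^-}$, and the rebalancing argument showing that $v\mapsto\Delta_v\cap T_v$ is a linear family all have counterparts in the text (Lemma \ref{l:sum} is exactly your rebalancing statement). But the step you flag as ``the real obstacle'' is a genuine gap, and as written the proof does not close. From the data you actually establish --- $Q^+=\kappa c^2$ a positive multiple of a square, $Q^-$ of signature $(1,\ell)$, and $Q^+-Q^-\ge 0$ on the cone generated by $\Omega$ --- nothing follows about the sign of $Q^+-Q^-$ on the rest of the forward cone of $Q^-$, let alone on $\Vc$. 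Concretely, on $\R^2$ take $Q^-=x^2-y^2$, $c=x$, $\kappa=1-\delta$ with $0<\delta<1$, and $\Omega$ an open subcone of $\{x>y>\sqrt{\delta}\,x\}$: there $Q^->0$ and $\kappa c^2-Q^-=y^2-\delta x^2>0$, yet $\kappa c^2-Q^-$ is indefinite (it is negative at $(1,0)$, a point of the forward cone of $Q^-$). So the propagation needs genuine geometric input --- either arranging that the cone over $\Omega$ exhausts the forward cone of $Q^-$ with $c>0$ there, or verifying $Q^-\le\kappa c^2$ on that whole cone directly --- and neither is carried out.

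The paper avoids the problem entirely by \emph{not} tying the truncation level to $v$: it takes $f(v,t)=C_t\sm(g(v)\cup\{0\})$ as a linear family on $\Vc\times\R$ with $t$ an independent coordinate, so that identity $(V)$ yields $Q_\alpha(v,t)=c't^2-Q^C_\beta(v)$ with the two summands depending on \emph{disjoint} sets of variables. Signatures of forms in disjoint variables add; the left-hand side has signature $(1,\ell)$ by the classical Aleksandrov--Fenchel inequality, the term $c't^2$ already supplies the unique positive square, and hence $-Q^C_\beta$ must be negative semidefinite --- no propagation step is needed. Your construction is precisely the restriction of this two-variable family to the section $t=c(v)$, and that restriction is what destroys the disjointness and creates the gap. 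The cleanest repair is to keep $t$ free, i.e., to pass to the paper's family on $\Vc\times\R$; alternatively, one can use the orthogonality relation $\mu(\1_{C_{t_0}},-\1_A,\dots)=0$ from the paper's second proof, which reduces the claim to the standard fact that the Aleksandrov--Fenchel form is negative semidefinite on the orthogonal complement of a convex body.
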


The inequality stated in Theorem A is called the \emph{coconvex
Aleksandrov--Fenchel inequality}.
In recent paper \cite{Fi}, Theorem A is proved under the assumption
that $C$ is a fundamental cone of some Fuchsian group $\Gamma$ acting by linear isometries
of a pseudo-Euclidean metric, and $C\sm (g(v)\cup\{0\})$ is the intersection of some convex
$\Gamma$-invariant set with $C$, for every $v\in\Omega$.
Theorem A is motivated by an Aleksandrov--Fenchel type inequality
for (mixed) intersection multiplicities of ideals \cite{KK}.

The following inequalities can be derived from Theorem A in the same way as
similar inequalities for convex bodies follow from the classical Alexandrov--Fenchel
inequality (cf. \cite{Fi}):

\noindent\emph{Reversed Brunn--Minkowski inequality:} the function $\Vol_\beta^{\frac 1d}$ is convex, i.e.
$$
(\Vol_\beta(tu+(1-t)v))^\frac 1d\le t\Vol_\beta(u)^\frac 1d+(1-t)\Vol_\beta(v)^\frac 1d,\quad t\in [0,1].
$$

\noindent\emph{Generalized reversed Brunn--Minkowski inequality:} the function
$\left(L_{v_1}\dots L_{v_k}\Vol_\beta\right)^\frac 1{d-k}$ is convex.

\noindent\emph{First reversed Minkowski inequality:}
$$
\left(\frac 1{d!}L_{u}L_{v}^{d-1}(\Vol_\beta)\right)^d\le \Vol_\beta(u)\Vol_\beta(v)^{d-1}.
$$

\noindent\emph{Second reversed Minkowski inequality:} if all marked points coincide with $u$, then
$$
B_\beta^C(u,v)^2\le \Vol_\beta(u)\, B_\beta^C(v,v)
$$

\subsection{Coconvex polytopes as virtual convex polytopes}
The set of convex polytopes is closed under Minkowski addition
but not closed under ``Minkowski subtraction''.
Virtual convex polytopes are geometric objects introduced in \cite{KPa}
that can be identified with formal Minkowski differences of convex polytopes.
We will now briefly recall the notion of a virtual convex polytope.

Consider the smallest ring $\Rc(\R^d)$ of sets containing all closed
half-planes in $\R^d$.
Clearly, all convex polytopes belong to $\Rc(\R^d)$.
There is a unique finitely additive measure $\chi$ on $\Rc(\R^d)$ such that,
for every closed bounded set $A\in\Rc(\R^d)$, the number $\chi(A)$ is equal
to the Euler characteristic of $A$.
Let $Z(\R^d)$ be the $\Z$-algebra of all measurable functions with respect to
$\Rc(\R^d)$ with values in $\Z$, and let $Z_c(\R^d)$ be its subalgebra
consisting of functions with compact support.
With every element $A\in\Rc(\R^d)$, we associate its \emph{indicator function}
$\1_A$ that is equal to 1 on $A$ and to 0 elsewhere.
The additive group of $Z_c(\R^d)$ is spanned by the indicator functions of
convex polytopes.
If $\alpha$, $\beta\in Z_c(\R^d)$, we define the \emph{Minkowski product}
$\alpha*\beta$ as the convolution
$$
\alpha*\beta(x)=\int \alpha(y)\beta(x-y) d\chi(y).
$$
(The integration is performed over $\R^d$ with respect to the measure $\chi$.)
We recall one simple lemma from \cite{KPa}:

\begin{lem}
\label{l:mink-prod}
  Suppose that a convex polytope $C$ in $\R^d$ is equal to the Minkowski
  sum of convex polytopes $A$ and $B$.
  Then $\1_C=\1_A*\1_B$.
\end{lem}

Thus the Minkowski sum of convex polytopes corresponds to the Minkowski
product in $Z_c(\R^d)$.

\begin{proof}
  Consider the function
  $$
  \varphi(x)=\int \1_A(y)\1_B(x-y) d\chi(y).
  $$
  If $x\in C$, then the set $F(y)$ of all $y\in A$ such that $x-y\in B$
  is a nonempty convex polytope (clearly, both conditions
  $y\in A$ and $x-y\in B$ define systems of linear inequalities on $y$).
  Therefore, $\varphi(x)=1$.
  If $x\not\in C$, then the set $F(y)$ is empty; therefore, $\varphi(x)=0$.
  We see that $\varphi$ coincides with $\1_C$.
\end{proof}

It follows from Lemma \ref{l:mink-prod} that $\1_{\{0\}}$ is the identity
element of the ring $Z_c(\R^d)$.
It is proved in \cite{KPa} that, for every convex polytope $A$ in $\R^d$,
the indicator function $\1_A$ is an invertible element of the ring $Z_c(\R^d)$, i.e.,
there exists an element $\varphi\in Z_c(\R^d)$ with the property
$\varphi*\1_A=\1_{\{0\}}$ (we write $\varphi=\1_A^{-1}$).
The function $\varphi:\R^d\to\Z$ admits a simple explicit description:
it is equal to $(-1)^d$ at all interior points of the set
$\{-x\,|\, x\in A\}$ and to 0 at all other points.
\emph{Virtual $($convex$)$ polytopes} are defined as elements of $Z_c(\R^d)$
of the form $\1_A*\1_B^{-1}$, where $A$ and $B$ are convex polytopes.
If we identify convex polytopes with their indicator functions, then
virtual polytopes are identified with formal Minkowski differences
of convex polytopes.
Virtual polytopes form a commutative group under Minkowski multiplication.

The following theorem is a general principle that allows to reduce
various facts about coconvex polytopes to the corresponding facts
about convex polytopes.

\begin{thmB}
Let $C\subset\R^d$ be a closed strictly convex cone with the apex at $0$
and a nonempty interior.
Suppose that $\Delta\subset C$ is a convex closed subset with a bounded
complement, and $A=C\sm (\Delta\cup\{0\})$ is the corresponding
$C$-coconvex body.
Fix a linear function $\xi:\R^d\to\R$ such that $\xi\ge 0$ on $C$
and $\xi^{-1}(0)\cap C=\{0\}$; set $\Delta_t=\Delta\cap\{\xi\le t\}$.
\begin{enumerate}
\item
The function $-\1_A$ is a virtual polytope. Moreover, we have
$$
-\1_A=\1_{\Delta_t}*\1_{C_t}^{-1}.
$$
for all sufficiently large $t\in\R$.
\item
If $A$ and $B$ are $C$-coconvex bodies, then
$$
-\1_{A\oplus B}=(-\1_A)*(-\1_B).
$$
\end{enumerate}
\end{thmB}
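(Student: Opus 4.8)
The plan is to prove part (1) first and then deduce part (2) by a short formal computation using Lemma~\ref{l:mink-prod} and the definition of $\oplus$.

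For part (1), fix $t$ large enough that $C\sm\Delta\subset\{\xi<t\}$; this is possible since $C\sm\Delta$ is bounded and $\xi$ is proper on $C$ (the hypotheses $\xi\ge 0$ on $C$ and $\xi^{-1}(0)\cap C=\{0\}$ force $\xi$ to be positive on $C\sm\{0\}$, hence bounded below by a positive constant on the compact cross-section $C\cap\{\xi=1\}$, so $\{\xi\le t\}\cap C$ is compact for every $t$). With this choice, $C_t=C\cap\{\xi\le t\}$ and $\Delta_t=\Delta\cap\{\xi\le t\}$ are convex polytopes (if $C$ and $\Delta$ are polyhedral; in general one should read ``virtual polytope'' as ``virtual convex body'', the argument being identical), and moreover $C_t\sm\Delta_t=C\sm\Delta=\ol A\sm(\text{lower-dimensional boundary part})$. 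I would first establish the indicator identity $\1_{C_t}=\1_{\Delta_t}+\1_{\ol A\cap\{\xi\le t\}}-\1_{\Delta\cap\{\xi\le t\}\cap\ol A}$ and then, more cleanly, argue directly with the closed coconvex piece: since $A$ and $\Delta$ partition $C\sm\{0\}$ up to the shared boundary, one gets on the level of $\chi$-measurable functions
$$
\1_{C_t}=\1_{\Delta_t}+\1_A-\1_{\d\Delta\cap C}+(\text{correction on }\{\xi=t\}),
$$
and the correction terms vanish because $A\subset\{\xi<t\}$ and because $\d\Delta\cap C$ contributes $0$ after the whole computation is pushed through $*\,\1_{C_t}^{-1}$. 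Rather than fight these boundary subtleties head-on, the cleaner route I would actually take is: multiply the desired identity by $\1_{C_t}$ and show $-\1_A*\1_{C_t}=\1_{\Delta_t}$ in $Z_c(\R^d)$. By Lemma~\ref{l:mink-prod} applied to $C_t=\Delta_t+C_t$? — no; instead use that $\1_{C_t}^{-1}$ is supported on $-\Int(C_t)$ with value $(-1)^d$, convolve, and evaluate the resulting integral $\int(-\1_A)(y)\,\1_{C_t}(x-y)\,d\chi(y)$ pointwise: for $x\in\Int\Delta_t$ the fibre $\{y:\ y\notin\Delta,\ x-y\in C_t,\ y\in C\}$ is an open coconvex-type set whose Euler characteristic is computed from the long exact / inclusion–exclusion identity $\chi(C\cap(x-C_t))-\chi(\Delta\cap(x-C_t))$, and a direct check shows this equals $-1$ exactly when $x\in\Delta_t$ and $0$ otherwise. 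This pointwise Euler-characteristic bookkeeping is the technical heart of the argument.

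For part (2), assume part (1). Let $\Delta_A,\Delta_B$ be the unbounded components of $C\sm A$, $C\sm B$, so that $A\oplus B=C\sm((\Delta_A+\Delta_B)\cup\{0\})$ and the unbounded component of its complement is $\Delta_{A\oplus B}=\Delta_A+\Delta_B$. Choose $t$ simultaneously large for $A$, $B$, and $A\oplus B$. Using $\Delta_A+\Delta_B\supset C_t$ translated appropriately, one has $(\Delta_A)_t+(\Delta_B)_t$ agreeing with $(\Delta_{A\oplus B})_t$ up to sets outside a large ball, hence $\1_{(\Delta_{A\oplus B})_t}=\1_{(\Delta_A)_t}*\1_{(\Delta_B)_t}*\1_{C_t}^{-1}$ by Lemma~\ref{l:mink-prod} (writing $\Delta_A+\Delta_B = ((\Delta_A)_t + (\Delta_B)_t)$ near infinity and correcting by one copy of $C_t$). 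Then
$$
-\1_{A\oplus B}=\1_{(\Delta_{A\oplus B})_t}*\1_{C_t}^{-1}
=\bigl(\1_{(\Delta_A)_t}*\1_{C_t}^{-1}\bigr)*\bigl(\1_{(\Delta_B)_t}*\1_{C_t}^{-1}\bigr)
=(-\1_A)*(-\1_B),
$$
where the middle equality is just commutativity/associativity of $*$ together with the cancellation $\1_{C_t}*\1_{C_t}^{-1}=\1_{\{0\}}$.

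The main obstacle I anticipate is entirely in part (1): making the ``up to boundary / up to sets outside a large ball'' statements precise without the bookkeeping collapsing. Indicator functions of closed versus open versus half-open polytopes differ on lower-dimensional faces, and although such differences are invisible to the Euler-characteristic measure $\chi$ when the sets are bounded, one must be careful that the cone $C$ and the unbounded set $\Delta$ are not bounded, so one cannot simply discard boundary pieces — they have to be truncated by $\{\xi\le t\}$ first, and only then do the face-discrepancies become $\chi$-negligible. I expect the cleanest fix is to phrase everything in terms of the ring $\Rc(\R^d)$ and the identity $\1_{C_t}=\1_{\Delta_t}+(-1)\cdot(\text{something supported in }\ol A)$ valid as honest functions, deferring all Euler-characteristic evaluation to the single pointwise computation above; once that identity is in hand, convolving with $\1_{C_t}^{-1}$ and invoking Lemma~\ref{l:mink-prod} finishes both parts with no further geometry.
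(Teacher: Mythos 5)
Your overall strategy for part (1) --- proving $(-\1_A)*\1_{C_t}=\1_{\Delta_t}$ by evaluating the convolution pointwise via Euler characteristics --- can be made to work, but the computation you yourself call ``the technical heart'' is wrong as written, and the error occurs at exactly the point the theorem is designed to address. Since $A=C\sm(\Delta\cup\{0\})$, the fibre over $x$ is $A\cap(x-C_t)$ and the correct inclusion--exclusion reads
$$
\chi\bigl(A\cap(x-C_t)\bigr)=\chi\bigl(C\cap(x-C_t)\bigr)-\chi\bigl(\Delta\cap(x-C_t)\bigr)-\chi\bigl(\{0\}\cap(x-C_t)\bigr).
$$
You describe the fibre as $\{y:\ y\notin\Delta,\ x-y\in C_t,\ y\in C\}$ and use only the first two terms; that computes the convolution for $C\sm\Delta$ rather than for $A$, and for $x\in\Delta_t$ it yields $1-1=0$, not the $-1$ you assert. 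The omitted term $-\chi(\{0\}\cap(x-C_t))=-\1_{C_t}(x)$ is precisely what turns $0$ into $-1$ on $\Delta_t$; without it the resulting identity $-\1_{C\sm\Delta}=\1_{\Delta_t}*\1_{C_t}^{-1}$ is false --- this is exactly why the apex is deleted in the definition of a coconvex body, as the paper remarks after the statement of Theorem B. A second gap: your ``direct check'' requires $\Delta\cap(x-C_t)=\0$ for $x\notin\Delta$ (and $=$ a nonempty compact convex set for $x\in\Delta$). The emptiness statement is equivalent to the $C$-convexity $\Delta+C=\Delta$, which is not automatic and needs the boundedness of $C\sm\Delta$ (this is Lemma \ref{l:Cconv}); you never state or prove it.

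With those two repairs the pointwise route does go through, and your part (2) is essentially correct, provided you also justify $(\Delta_A)_t+(\Delta_B)_t=(\Delta_A+\Delta_B)_{2t}$ (a cousin of Lemma \ref{l:sum}) rather than appealing to ``agreement up to sets outside a large ball.'' For comparison, the paper avoids all pointwise Euler-characteristic bookkeeping: it proves the closed form $\1_{\Delta_t}*\1_{C_t}^{-1}=\1_{\Delta_t}-\1_{C_t}+\1_{\{0\}}$ (Proposition \ref{p:chi}) by multiplying the right-hand side by $\1_{C_t}$ and using only $\1_{X+Y}=\1_X*\1_Y$ together with the Minkowski-sum identities $\Delta_t+C_t=\Delta_{2t}$ and $C_t+C_t=C_{2t}$; since $\1_{\Delta_t}-\1_{C_t}+\1_{\{0\}}=-\1_A$ for large $t$, both parts of the theorem then follow by pure algebra in $Z_c(\R^d)$. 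That is the ``cleaner fix'' you were hoping for, and it makes the role of the $+\1_{\{0\}}$ term impossible to overlook.
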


Theorem B explains our definition of a coconvex body, in particular, the
choice of the boundary points that need to be included into it.
Theorem A can be deduced from Theorem B and the convex Aleksandrov--Fenchel inequalities.
There are many other consequences of Theorem B that deal with $C$-coconvex
integer polytopes.
Some of them are stated below.

Define a \emph{convex integer polytope} as a convex polytope with integer vertices.
Let $\Rc_c(\Z^d)$ be the minimal subring of the ring of sets $\Rc(\R^d)$ containing
all convex integer polytopes.
Similarly, let $Z_c(\Z^d)$ be the minimal subalgebra of $Z_c(\R^d)$ containing
the indicator functions of all convex integer polytopes.
Clearly, any function in $Z_c(\Z^d)$ is measurable with respect to $\Rc_c(\Z^d)$.
A \emph{valuation on integer polytopes} is by definition a finitely additive
measure on $\Rc_c(\Z^d)$.
For a valuation $\mu$ on integer polytopes and an element $\varphi$ of
the ring $Z_c(\R^d)$, we define $\mu(\varphi)$ as the integral of the function
$\varphi$ with respect to the measure $\mu$.

Recall that a function $P$ on a commutative multiplicative group $G$
is said to be \emph{polynomial} of degree $\le d$ if, for every fixed $g\in G$,
the function $P(gx)-P(x)$ is polynomial of degree $\le d-1$.
Polynomial functions of degree 0 are by definition constant functions.
Define the group of \emph{virtual integer polytopes} as the subgroup
of the group of virtual polytopes generated by the indicator functions of
all convex integer polytopes.
Recall the following theorem of \cite{KPa}:
\emph{if a valuation $\mu$ on integer polytopes is polynomial of degree $\le k$,
i.e., for every convex integer polytope $A$, the function $x\mapsto \mu(A+x)$
is a polynomial on $\Z^d$ of degree at most $k$, then the function
$\varphi\mapsto \mu(\varphi)$ is a polynomial function
of degree $\le d+k$ on the group of virtual integer polytopes with $*$
as the group operation}.
An important example of a valuation on integer polytopes is the valuation
$\mu$ that assigns the number of integer points in $X$ to every $X\in\Rc_c(\Z^d)$.
This valuation can be evaluated on all virtual integer polytopes.
In particular, for every integer convex polytope $A$, the number
$\mu(\1_A^{*n})$ depends polynomially on $n$ (cf. \cite{Mc77,KPa}).
This polynomial function is called the \emph{Ehrhart polynomial} of $A$.

Let $C$, $\Delta$ and $A$ be as in Theorem B.
Suppose that $C$ is a \emph{integer cone}, i.e., there exist elements
$v_1$, $\dots$, $v_n\in\Z^d$ such that
$$
C=\{t_1v_1+\cdots+t_nv_n\,|\, t_1,\dots,t_n\ge 0\}.
$$
Suppose also that all vertices of $\Delta$ belong to $\Z^d$
(in this case, we will say that $A$ is a \emph{$C$-coconvex integer polytope}).
The following statements are corollaries of Theorem B.

\begin{cor}
\label{c:poly}
  Let $\mu$ be a polynomial valuation of degree $\le k$ on integer convex polytopes,
  and let $A_1$, $\dots$, $A_n$ be $C$-coconvex integer polytopes.
  The number
  $$
  E(m_1,\dots,m_n)=\mu(m_1A_1\oplus m_2A_2\oplus\cdots\oplus m_nA_n)
  $$
  is defined for any choice of positive integers $m_1$, $\dots$, $m_n$,
  and depends polynomially $($of degree $\le k+d)$ on $m_1$, $\dots$, $m_n$.
\end{cor}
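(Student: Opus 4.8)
The plan is to combine Theorem B with the polynomiality theorem of \cite{KPa} recalled above, working inside the commutative group $G$ of virtual integer polytopes (with $*$ as operation). The first --- and essentially the only delicate --- step is to verify that $-\1_{A_i}\in G$ for each $i$. Since $C$ is an integer cone, its dual cone $C^\vee=\{\xi\mid\xi\ge 0\text{ on }C\}$ is rational with nonempty interior, so the linear function $\xi$ of Theorem B may be chosen to lie in the interior of $C^\vee$ and to have integer coefficients (the interior condition secures $\xi^{-1}(0)\cap C=\{0\}$). I would then fix a positive integer $t$ divisible by $\xi(v_j)$ for each of the integer generators $v_j$ of $C$ and large enough that, for every $i$, the bounded set $C\sm\Delta_i$ lies in $\{\xi<t\}$ and part (1) of Theorem B applies. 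Then $C_t=C\cap\{\xi\le t\}$ is an integer polytope: it is the convex hull of $0$ and the compact polytope $C\cap\{\xi=t\}$, whose vertices lie among the points $\tfrac{t}{\xi(v_j)}v_j\in\Z^d$. Likewise every truncation $\Delta_i\cap\{\xi\le t\}$ is an integer polytope, since its vertices are the vertices of $\Delta_i$ (in $\Z^d$ by hypothesis) together with vertices of $C\cap\{\xi=t\}$. Hence part (1) of Theorem B gives
$$
\psi_i:=-\1_{A_i}=\1_{\Delta_i\cap\{\xi\le t\}}*\1_{C_t}^{-1}\in G.
$$

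Next I would express $E$ as a value of a polynomial function on $G$. Since $\Delta_i$ is convex, $m_i\Delta_i=\Delta_i+\dots+\Delta_i$ ($m_i$ summands), hence $m_iA_i=A_i\oplus\dots\oplus A_i$ ($m_i$ summands); iterating part (2) of Theorem B yields
$$
-\1_{m_1A_1\oplus\dots\oplus m_nA_n}=\psi_1^{*m_1}*\dots*\psi_n^{*m_n}\in G.
$$
Since $\mu$, extended to $Z_c(\R^d)$ by integration, is additive and $\Z$-homogeneous, and $-\1_{m_1A_1\oplus\dots\oplus m_nA_n}$ lies in $Z_c(\R^d)$, we obtain
$$
E(m_1,\dots,m_n)=\mu\!\left(\1_{m_1A_1\oplus\dots\oplus m_nA_n}\right)=-\,\mu\!\left(\psi_1^{*m_1}*\dots*\psi_n^{*m_n}\right).
$$
By the theorem of \cite{KPa}, the function $\Phi(\varphi):=\mu(\varphi)$ is polynomial of degree $\le d+k$ on $G$.

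Finally I would invoke the general fact that if $\Phi$ is a polynomial function of degree $\le D$ on a commutative group $G$ and $\psi_1,\dots,\psi_n\in G$, then $(m_1,\dots,m_n)\mapsto\Phi(\psi_1^{*m_1}*\dots*\psi_n^{*m_n})$ is a polynomial function of degree $\le D$ on $\Z^n$, hence coincides with an ordinary polynomial of total degree $\le D$. This is proved by induction on $D$: the case $D=0$ is trivial, and for $D\ge 1$ the first difference of the function in the $j$-th variable equals $(m_1,\dots,m_n)\mapsto\Phi_j(\psi_1^{*m_1}*\dots*\psi_n^{*m_n})$, where $\Phi_j(\varphi):=\Phi(\psi_j*\varphi)-\Phi(\varphi)$ is polynomial of degree $\le D-1$ on $G$ by the very definition of polynomiality; the inductive hypothesis makes each such difference polynomial of degree $\le D-1$ on $\Z^n$, and since the defining property need only be checked on the generators of $\Z^n$, the function itself has degree $\le D$. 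The passage from a group-polynomial function on $\Z^n$ to an honest polynomial of the same total degree is classical (Newton's forward-difference formula). Applying this with $D=d+k$ shows that $E(m_1,\dots,m_n)$ agrees with a polynomial of degree $\le k+d$ in $m_1,\dots,m_n$, as required. The main obstacle of the argument is the integrality bookkeeping in the first step --- arranging that all the truncations of $C$ and of the $\Delta_i$ are genuinely integer polytopes, so that the $\psi_i$ lie in $G$; once that is done, the corollary is a formal consequence of Theorem B and the quoted result of \cite{KPa}.
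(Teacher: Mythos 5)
Your proposal is correct and follows essentially the same route as the paper, which deduces the corollary directly from Theorem B together with the quoted theorem of \cite{KPa} on polynomial valuations and virtual integer polytopes. The details you supply --- choosing an integral $\xi$ and a truncation level $t$ so that $C_t$ and the $\Delta_i\cap\{\xi\le t\}$ are genuine integer polytopes, and composing the polynomial function on the group of virtual integer polytopes with the homomorphism $(m_1,\dots,m_n)\mapsto\psi_1^{*m_1}*\cdots*\psi_n^{*m_n}$ --- are exactly the bookkeeping the paper leaves implicit.
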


\begin{cor}
  \label{c:Ehrhart}
  The polynomial function $E$ thus obtained can be evaluated at
  $m_1=\cdots=m_n=-1$, and we have
  $$
  E(-1,\dots,-1)=(-1)^d\mu(X)-\mu(\{0\}),
  $$
  where the set $X\in\Rc_c(\Z^d)$ consists of all vectors $-v_1-\dots-v_n$, where
  each $v_i$ belongs to the closure of $A_i$ and to the interior of $C$.
\end{cor}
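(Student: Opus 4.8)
\textbf{Reduction to an inverse.} The plan is to recognize $E(-1,\dots,-1)$ as the value of $\mu$ on one explicit element of $Z_c(\R^d)$. Setting $\psi_i=-\1_{A_i}$ and $A'=A_1\oplus\dots\oplus A_n$, Theorem~B makes each $\psi_i$ a virtual integer polytope and gives $-\1_{m_1A_1\oplus\dots\oplus m_nA_n}=\psi_1^{*m_1}*\dots*\psi_n^{*m_n}$ for positive integers $m_i$, so $E(m_1,\dots,m_n)=-\mu(\psi_1^{*m_1}*\dots*\psi_n^{*m_n})$. By the cited theorem of \cite{KPa} the map $\varphi\mapsto\mu(\varphi)$ is polynomial of degree $\le d+k$ on the group of virtual integer polytopes, so along the subgroup generated by $\psi_1,\dots,\psi_n$ the right-hand side is (minus) the restriction of a polynomial in $(m_1,\dots,m_n)\in\Z^n$; this polynomial agrees with the polynomial $E$ of Corollary~\ref{c:poly} on $(\Z_{>0})^n$, hence everywhere. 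Thus I would conclude
$$
E(-1,\dots,-1)=-\mu\bigl(\psi_1^{*(-1)}*\dots*\psi_n^{*(-1)}\bigr)=-\mu\bigl((-\1_{A'})^{*(-1)}\bigr),
$$
using $(\psi_1*\dots*\psi_n)^{*(-1)}=(-\1_{A'})^{*(-1)}$ from Theorem~B(2).

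\textbf{Reduction to a function identity.} Since $\mu$ is a measure on $\Rc_c(\Z^d)$, the number $\mu(\varphi)$ depends on $\varphi$ only as a $\Z$-valued function, so it is enough to prove the identity of functions $(-\1_{A'})^{*(-1)}=\1_{\set0}+(-1)^{d+1}\1_X$, with $X$ the set of the statement; applying $\mu$ then yields $E(-1,\dots,-1)=-\mu(\set0)-(-1)^{d+1}\mu(X)=(-1)^d\mu(X)-\mu(\set0)$. (For $n=1$ the set $X$ is literally $-(\ol{A_1}\cap\Int C)$; in general one works with $A'=A_1\oplus\dots\oplus A_n$, for which $\ol{A'}\cap\Int C=\Int C\sm\Int(\Delta_1+\dots+\Delta_n)$.)

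\textbf{Computing the inverse.} To establish that identity I would fix $t$ large, put $\Delta=\Delta_1+\dots+\Delta_n$, and invoke Theorem~B(1): $(-\1_{A'})^{*(-1)}=\1_{C_t}*\1_{\Delta_t}^{*(-1)}$. Since $\Delta_t$ is a $d$-dimensional convex polytope, the explicit description of inverses recalled in the introduction gives $\1_{\Delta_t}^{*(-1)}=(-1)^d\1_{\Int(-\Delta_t)}$, so by the definition of $*$,
$$
(-\1_{A'})^{*(-1)}(x)=(-1)^d\,\chi\bigl(C_t\cap(x+\Int\Delta_t)\bigr).
$$
For $t$ large, $\Int\Delta_t=\Int\Delta\cap\set{\xi<t}$, so $C_t\cap(x+\Int\Delta_t)$ is a truncation of the convex set $K_x:=C\cap(x+\Int\Delta)$ (which has recession cone $C$): it is $K_x\cap\set{\xi\le t}$ when $\xi(x)>0$ and $K_x\cap\set{\xi<t+\xi(x)}$ when $\xi(x)\le0$.

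\textbf{The Euler-characteristic bookkeeping.} It then remains to show, for $t$ large and uniformly over the bounded support, that $\chi(C_t\cap(x+\Int\Delta_t))$ is $(-1)^d$ when $x=0$, is $-1$ when $-x$ lies in $\Int C\sm\Int\Delta$, and is $0$ otherwise --- exactly $(-1)^d(\1_{\set0}(x)+(-1)^{d+1}\1_X(x))$. The case $x=0$ is immediate (the set is the open polytope $\Int\Delta_t$). Otherwise I would use additivity of $\chi$: for a convex set with recession cone $C$ the ``cap'' $\set{\xi\ge T}$ has $\chi=0$ for $T$ large, so the open truncation $K_x\cap\set{\xi<T}$ has $\chi=\chi(K_x)$, and $\chi(K_x)=-\chi(C\cap(x+\d\Delta))$ because $\ol{K_x}=C\cap(x+\Delta)$ also has $\chi=0$ and $\ol{K_x}\sm K_x=C\cap(x+\d\Delta)$; and in the case $\xi(x)>0$, where the truncation is closed, one adds $\chi$ of the slice $K_x\cap\set{\xi=t}$, which for $t$ large equals $\chi((C\cap\set{\xi=t})\cap(x+\Int C))$ since far from the apex $\Int\Delta$ fills the relative interiors of the slices of $C$. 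Running this out face by face produces the stated values. The hard part is exactly this last bookkeeping: $C_t\cap(x+\Int\Delta_t)$ is neither open nor closed --- a closed cone, intersected with a translate of an open set and with a half-space --- so one must track precisely which of its faces are present; the open/closed status at the level $\xi=t$ flips with the sign of $\xi(x)$; and every ``$t$ large'' stabilization must be made uniform in $x$ over the support.
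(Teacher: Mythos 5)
Your proposal is correct in outline and arrives at exactly the intermediate fact the paper isolates as Proposition \ref{p:inv} (namely $(-\1_{A'})^{*(-1)}=\1_{\{0\}}+(-1)^{d-1}\1_{A'^\bullet}\circ\sigma$), but you compute that inverse by a genuinely different and substantially harder route. The paper's proof is two lines: by \cite{KPa} the Minkowski inverse of a virtual polytope $\varphi$ is $\star(\varphi)\circ\sigma$, where $\star$ is an \emph{additive} homomorphism with $\star(\1_B)=(-1)^d\1_{\Int(B)}$ on full-dimensional convex polytopes; applying $\star$ termwise to the three-term decomposition $-\1_{A'}=\1_{\Delta_t}-\1_{C_t}+\1_{\{0\}}$ of Theorem B immediately gives $(-1)^{d-1}\1_{A'^\bullet}+\1_{\{0\}}$, with no Euler-characteristic case analysis at all. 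You instead write the inverse as the convolution $\1_{C_t}*\1_{\Delta_t}^{*(-1)}$ and evaluate it pointwise as $(-1)^d\chi(C_t\cap(x+\Int\Delta_t))$; this does give the right answer (I checked the predicted values, including the sign flip of the open/closed status at the level $\xi=t$ depending on $\operatorname{sgn}\xi(x)$, which your case split correctly anticipates), but the ``bookkeeping'' you flag as the hard part --- Euler characteristics of locally closed convex sets, the fact that a line-free unbounded closed convex set has $\chi=0$, uniformity of the stabilization in $x$ --- is precisely what the additivity of $\star$ lets the paper skip, and you leave it as a sketch rather than carrying it out. If you want a complete argument with minimal effort, replace your third and fourth steps by the $\star$-operator computation; otherwise your route works but needs the case analysis finished. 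One further point common to both routes and not addressed in either: matching the statement's description of $X$ as the set of sums $-v_1-\cdots-v_n$ with $v_i\in\ol{A_i}\cap\Int C$ against $-(\ol{A'}\cap\Int C)=-(\Int C\sm\Int(\Delta_1+\cdots+\Delta_n))$ requires the identity $(\Int C\sm\Int\Delta_1)+\cdots+(\Int C\sm\Int\Delta_n)=\Int C\sm\Int(\Delta_1+\cdots+\Delta_n)$, which you mention only parenthetically; it deserves a line of proof (using $C$-convexity of the $\Delta_i$).
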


\subsection{Generating functions for integer points}
Let us now consider the following valuation $G$ on integer convex polytopes
with values in rational functions of some formal variables $x_1$, $\dots$, $x_d$:
$$
G(X)=\sum_{a\in X} x^a.
$$
Here $X$ is any element of $\Rc_c(\Z^d)$, and $x^a$ is the monomial
$x_1^{a_1}\cdots x_d^{a_d}$.
The rational function $G(X)$ is called the \emph{generating function for the integer
points in $X$}.
The following theorem is due to Brion \cite{Br}, see also \cite{KPb,BS}:
\emph{the valuation $G$ extends to the minimal ring of sets $\Rc(\Z^d)$ containing
all integer cones and their parallel translations by integer vectors; moreover,
for any integer convex polytope $A$, we have
$$
G(A)=\sum_{a\in\vrt(A)} x^a G(C_a),
$$
where $\vrt(A)$ is the set of all vertices of $A$,
and $C_a$ is the cone spanned by $A-a$ $($the difference is in the sense of Minkowski$)$.}

Note that $G(C_a)$ can be computed explicitly, by subdividing $C_a$ into
cones, each of which is spanned by a basis of $\Z^d$.
If a cone $C$ is spanned by a basis of $\Z^d$, then the computation of
$G(C)$ reduces to the summation of a geometric series.
In this paper, we will prove the following theorem that generalizes
Brion's theorem to coconvex polytopes.

\begin{thmC}
  Let $C$ be an integer cone, and let $A$ be a $C$-coconvex integer polytope.
  Then
  $$
  G(A)=-\sum_{a\in\vrt(A)\sm\{0\}} x^a G(C'_a)+G(C)-1,
  $$
  where $C'_a$ is the cone with the apex at $0$ such that a small
  neighborhood of $0$ in $C'_a$ coincides with a small neighborhood of
  $0$ in $\ol{C\setminus A}-a$.
\end{thmC}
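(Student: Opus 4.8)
The plan is to reduce Theorem C to the vertex decomposition of Brion's theorem applied to the convex polyhedron $\Delta$, by means of a single identity between indicator functions.

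The first thing I would establish is that $0\notin\Delta$. Suppose $0\in\Delta$, and pick $p\in C\sm\Delta$ (nonempty by hypothesis); then automatically $p\neq 0$. The ray $\{sp:s\ge 0\}$ lies in $C$, and since $C\sm\Delta$ is bounded there is $s>1$ with $sp\in\Delta$; convexity of $\Delta$ together with $0\in\Delta$ then forces $p=\frac1s(sp)\in[0,sp]\subset\Delta$, a contradiction. Hence $0\notin\Delta$, so $C\sm A=\Delta\sqcup\{0\}$ is a \emph{disjoint} union and
$$\1_A=\1_C-\1_\Delta-\1_{\{0\}}.$$
For the same reason $\vrt(A)\sm\{0\}$ is exactly the set of vertices of $\Delta$. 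Also $A$ is bounded, being contained in $C\sm\Delta$, so $G(A)=\sum_{a\in A\cap\Z^d}x^a$ is a finite sum and there is no convergence issue on the left-hand side.

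Next I would apply $G$ to the displayed identity. By additivity of the valuation $G$, together with $G(\{0\})=x^0=1$, this yields $G(A)=G(C)-G(\Delta)-1$, where $G(C)$ is supplied by Brion's theorem for cones. It remains to compute $G(\Delta)$. The polyhedron $\Delta$ is pointed (it lies in the strictly convex cone $C$) and has integer vertices, hence lies in $\Rc(\Z^d)$, and Brion's theorem, in its form for pointed integer polyhedra, gives
$$G(\Delta)=\sum_{a\in\vrt(\Delta)}x^a\,G(T_a),$$
where $T_a$ is the tangent cone of $\Delta$ at the vertex $a$ translated so that its apex is at $0$. For each vertex $a$ of $\Delta$ we have $a\neq 0$, so in a small neighborhood of $0$ the set $\ol{C\sm A}-a=(\Delta\cup\{0\})-a$ coincides with $\Delta-a$, which in turn coincides near $0$ with $T_a$; hence $T_a=C'_a$. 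Substituting, and using $\vrt(\Delta)=\vrt(A)\sm\{0\}$, gives
$$G(A)=G(C)-1-\sum_{a\in\vrt(A)\sm\{0\}}x^a\,G(C'_a),$$
which is precisely the formula of Theorem C.

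The step I expect to need the most care is the use of Brion's theorem for the \emph{unbounded} polyhedron $\Delta$, since the excerpt quotes it only for bounded integer polytopes. I would justify it by the cut-off device already used in Theorem B: fix a large $t$, apply the bounded Brion theorem to $\Delta_t=\Delta\cap\{\xi\le t\}$ and to $C_t=C\cap\{\xi\le t\}$, and observe that $\Delta_t$ and $C_t$ share the entire top face $C\cap\{\xi=t\}$ together with all of its tangent cones, so that the level-$t$ terms cancel when one expands $G$ of the identity $\1_\Delta=\1_C-\1_{C_t}+\1_{\Delta_t}$; what survives is exactly $\sum_{a\in\vrt(\Delta)}x^a\,G(T_a)$. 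Alternatively one may invoke the standard fact that every pointed integer polyhedron belongs to $\Rc(\Z^d)$ and that Brion's vertex formula holds for it. Everything else in the argument is routine.
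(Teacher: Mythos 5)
Your proof is correct and follows essentially the same route as the paper: the indicator identity $\1_A=\1_{C_t}-\1_{\Delta_t}-\1_{\{0\}}$ from Theorem B, Brion's theorem applied to the bounded polytopes $\Delta_t$ and $C_t$, and cancellation of the terms at their shared (top-face) vertices — which is exactly the cut-off justification you give for using Brion on the unbounded polyhedron $\Delta$. Your preliminary observation that $0\notin\Delta$ is a nice explicit addition, but otherwise the two arguments coincide.
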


Observe that usual exponential sums over the integer points of $A$ are obtained from the
rational function $G(A)$ by substituting the exponentials $e^{p_1}$, $\dots$, $e^{p_d}$ for
the variables $x_1$, $\dots$, $x_d$.
Sums of quasi-polynomials (in particular, sums of polynomials)
can be obtained from exponential sums by differentiation with respect to parameters
$p_1$, $\dots$, $p_d$.
A similar theory exists for integrals of exponentials, quasi-polynomials, etc.,
over convex or coconvex polytopes.

\begin{rem}
 In order to compute the number of integer points in $A$, one is tempted to
 substitute $x_i=1$ into the expression for the rational function $G(A)$
 through the generating functions of cones.
 However, this is problematic as the denominator of this expression vanishes
 at the point $(1,1,\dots,1)$.
 To obtain a numeric value, one can, e.g., choose a generic line passing through the
 point $(1,1,\dots,1)$, consider the Laurent series expansion of $G(A)$ along
 this line, and then take the free coefficient of this Laurent series.
 The same procedure is applicable to computing a quasi-polynomial sum over $A$
 for exceptional values of $p=(p_1,\dots,p_d)$, for which the rational function of
 $e^{p_1}$, $\dots$, $e^{p_d}$, equal to this quasi-polynomial sum at generic points, has a pole.
\end{rem}

\section{Proof of Theorem A}
Recall that every quadratic form $Q$ on a finite dimensional real vector space $\Vc$ can be represented in
the form
$$
x_1^2+\cdots+x_k^2-x_{k+1}^2-\cdots-x_{k+\ell}^2
$$
for a suitable linear coordinate system $(x_1,\dots,x_m)$, $m\ge k+\ell$.
The pair $(k,\ell)$ is called the \emph{signature} of $Q$.
It is well known that $Q$ has signature $(1,\ell)$ for some $\ell$ if and only if
both of the following conditions are fulfilled:
\begin{enumerate}
  \item there exists a vector $v_0\in\Vc$ with $Q(v_0)>0$;
  \item the corresponding symmetric bilinear form $B$ (such that $B(u,u)=Q(u)$)
  satisfies the reversed Cauchy--Schwarz inequality: $B(u,v)^2\ge Q(u)Q(v)$
  for all $u\in\Vc$ and $v\in\Vc$ such that $Q(v)>0$.
\end{enumerate}
Thus, the Aleksandrov--Fenchel inequality is equivalent to the fact that
$Q_\alpha$ has signature $(1,\ell(\alpha))$ for every finite-dimensional linear
family $\alpha$ of convex $d$-dimensional bodies with $d-2$ marked points.

\subsection{An illustration in the case $d=2$}
To illustrate Theorem A in a simple case, we set $d=2$.
Then $C$ is a wedge.
\begin{figure}
  \includegraphics[height=6cm]{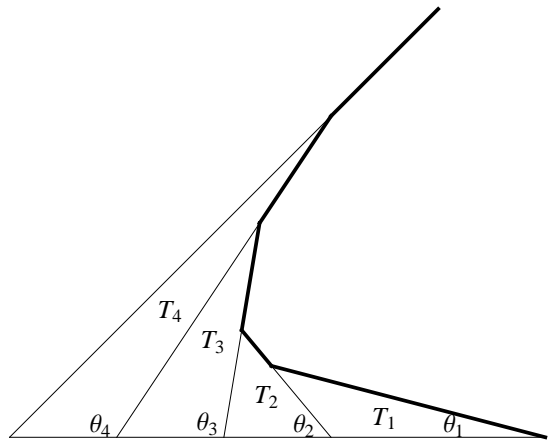}
  \caption{Co-convex body in the plane}
  \label{f:plane}
\end{figure}
Introduce an affine coordinate system $(x,y)$ in $\R^2$.
We may assume that $C$ lies in the upper half-plane $y\ge 0$, and that
one of the two boundary rays of $C$ coincides with the positive $x$-semiaxis.
Suppose that a $C$-coconvex body $A$ is a polygon with edges $E_1(A)$, $\dots$, $E_n(A)$,
ordered from right to left.
Let $\theta_k$ denote the angle between the line containing the edge $E_k(A)$ and the
$x$-axis, as indicated in Figure \ref{f:plane}.
Here and in the sequel, $k$ runs through 1, $\dots$, $n$.
Fix a Euclidean metric on $\R^2$.
We will write $h_k(A)$ for the distance from the origin to the line containing
the edge $E_k(A)$.

If we now have two co-convex polygons $A$ and $B$ with the same number of edges
and with the same angles $\theta_k$, then we have
$$
h_k(A\oplus B)=h_k(A)+h_k(B).
$$
Thus, if the angles $\theta_k$ are fixed, co-convex polygons can be identified with
vectors $(h_1(A),\dots,h_n(A))$, and the operation $\oplus$ on $C$-coconvex polygons
corresponds to the usual vector addition.
The numbers $h_k(A)$ are called \emph{support numbers} of $A$.
We can now consider the following linear family $\beta=(\Vc,\Omega,g)$ of
$C$-coconvex polygons.
The space $\Vc$ is $\R^n$ (the dimension of the space being equal to
the number of support numbers), $\Omega$ is a small neighborhood of
$(h_1(A),\dots,h_n(A))$, and $g(h_1,\dots,h_n)$ is the $C$-coconvex
polygon with support numbers $h_1$, $\dots$, $h_n$.

A $C$-coconvex polygon $g(v)$ can be naturally represented as a union of triangles
$T_1(v)$, $\dots$, $T_n(v)$, as shown in Figure \ref{f:plane}.
The angles of these triangles depend only on $\theta_1$, $\dots$, $\theta_n$.
Namely, $T_k(v)$ has angles $\theta_k$, $\pi-\theta_{k+1}$, $\theta_{k+1}-\theta_k$.
It follows that $T_k(v)$ and $T_k(w)$ are similar.
We conclude that the area of $T_k(v)$ is the square of some linear functional
$\varphi_k$ on $\Vc$.
Indeed, if the angles of a triangle are fixed, then its area is equal,
up to a constant positive factor depending only on the angles, to the square of the length
of any chosen edge.
The length $\lambda_k$ of the horizontal edge of $T_k(v)$ can be expressed
as a linear functional on $\Vc$:
$$
\lambda_k=\frac{h_k}{\sin\theta_k}-\frac{h_{k+1}}{\sin\theta_{k+1}}
$$
(if $k=n$, then the second term in the right-hand side should be omitted).

We can now write the following formula for the area of $f(v)$:
$$
\Vol_\beta(v)=\varphi_1(v)^2+\dots+\varphi_n(v)^2.
$$
It is obvious from this representation that $\Vol_\beta(v)$ is a positive definite
quadratic form on $\Vc$.
The Cauchy--Schwarz inequality written for this quadratic form yields
the inequality
$$
B^C_\beta(u,v)^2\le \Vol_\beta(u)\Vol_\beta(v).
$$
This is the reversed Aleksandrov--Fenchel inequality for $d=2$
(for a special choice of a linear family $\beta$, but in fact the
general case easily reduces to this special case).
The argument presented above imitates some classical proofs of the isoperimetric
inequality.

\subsection{Reduction of Theorem A to the Aleksandrov--Fenchel inequality}
\label{ss:AF}
In this subsection, we prove Theorem A.
The proof is a reduction to the convex Aleksandrov--Fenchel inequality.
Fix a strictly convex closed cone $C$ with the apex at $0$ and a linear family
$\beta=(\Vc,\Omega,g)$ of $C$-coconvex bodies.
We may assume that $g(\Omega)$ is bounded in the sense that there is
a large ball in $\R^d$ that contains all coconvex bodies $g(v)$, $v\in\Omega$.
Consider a linear functional $\xi$ on $\R^d$ such that $\xi\ge 0$ on $C$ and
$\xi^{-1}(0)\cap C=\{0\}$.
For every subset $A\subset C$, we let $A_t$ denote the set of all
points $a\in A$ such that $\xi(a)\le t$.
Since $g(\Omega)$ is bounded, there exists a real number $t_0>0$
such that $g(v)=g(v)_{t_0}$ for all $v\in\Omega$.

Choose any $t_1>t_0$.
We will now define a linear family
$\alpha=(\Vc\times\R,\Omega\times (t_0,t_1),f)$ of convex bodies as follows.
For $v\in\Omega$ and $t\in (t_0,t_1)$, we set $f(v,t)$ to be the convex
body $C_t\sm (g(v)\cup\{0\})$.
The proof of the coconvex Aleksandrov--Fenchel inequality is based
on the comparison between the linear families $\alpha$ and $\beta$.

We have the following relation between the polynomials $\Vol_\alpha$ and $\Vol_\beta$:
$$
\Vol_\alpha(v,t)=\Vol(C_t)-\Vol_\beta(v),\eqno{(V)}
$$
which is clear from the additivity of the volume.
The first term in the right-hand side has the form $ct^d$, where $c$
is some positive constant.
The second term in the right-hand side does not depend on $t$.
It follows from $(V)$ that $\Vol_\beta$ is a homogeneous degree $d$ polynomial.

Let us mark some points $(v_1,s_1)$, $\dots$, $(v_{d-2},s_{d-2})$ in
$\Omega\times (t_0,t_1)$.
Apply the differential operator $\frac 2{d!}L_{(v_1,s_1)}\dots L_{(v_{d-2},s_{d-2})}$
to both sides of $(V)$.
We obtain that
$$
Q_\alpha(v,t)=c't^2-Q^C_\beta(v),\eqno{(Q)}
$$
where $c'$ is some positive constant (equal to $c s_1\cdots s_{d-2}$).
In the right-hand side of $(Q)$, we have the difference of two quadratic
forms, moreover, these two forms depend on disjoint sets of variables.

If $q_1$, $q_2$ are quadratic forms depending on disjoint sets of variables,
and $(k_1,\ell_1)$, $(k_2,\ell_2)$, respectively, are signatures of these forms,
then $q_1+q_2$ is a quadratic form of signature $(k_1+k_2,\ell_1+\ell_2)$.
We now apply this observation to identity $(Q)$.
The first term of the right-hand side, $c't^2$, has signature $(1,0)$.
The signature of the left-hand side is equal to $(1,\ell)$ for some $\ell\ge 0$,
by the classical Alexandrov--Fenchel inequality.
It follows that the signature of $Q^C_{\beta}$ is $(\ell,0)$, i.e. the
form $Q^C_{\beta}$ is non-negative.

\section{Proof of Theorem B and its corollaries}
In this section, we prove Theorem B and derive a number of corollaries from it.

\subsection{Proof of Theorem B}
Consider a closed strictly convex cone $C\subset\R^d$ with the apex at $0$
and a nonempty interior.
A closed convex subset $\Delta\subset C$ is said to be \emph{$C$-convex} if
$\Delta+C=\Delta$.
The following lemma gives the most important example of $C$-convex sets.

\begin{lem}
\label{l:Cconv}
  If $\Delta\subset C$ is a convex set, and the complement of $\Delta$ in
  $C$ is bounded, then $\Delta$ is $C$-convex.
\end{lem}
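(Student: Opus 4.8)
The plan is to prove the only nontrivial inclusion, $\Delta + C \subseteq \Delta$; the reverse inclusion $\Delta \subseteq \Delta + C$ holds trivially because $0 \in C$. So I would fix a point $x \in \Delta$ and a vector $c \in C$ and set out to show that $x + c \in \Delta$. If $c = 0$ there is nothing to prove, so I assume $c \neq 0$.

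The first step is the elementary remark that the entire ray $\{x + tc : t \geq 0\}$ is contained in $C$. Indeed, $x \in \Delta \subseteq C$, and a convex cone is closed under addition and under multiplication by nonnegative scalars, so $x + tc \in C + C \subseteq C$ for every $t \geq 0$.

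The heart of the argument is that this ray must eventually re-enter $\Delta$. Since $C \setminus \Delta$ is bounded, it lies in some ball of radius $R$ centered at the origin. Because $c \neq 0$, we have $\|x + tc\| \geq t\|c\| - \|x\| \to \infty$ as $t \to \infty$, so there is a value $T \geq 1$ with $\|x + Tc\| > R$. Since $x + Tc$ belongs to $C$ but not to $C \setminus \Delta$, it must lie in $\Delta$. Finally, convexity of $\Delta$ finishes the proof: writing $x + c = \left(1 - \frac{1}{T}\right) x + \frac{1}{T}(x + Tc)$ exhibits $x + c$ as a convex combination of the two points $x$ and $x + Tc$ of $\Delta$, whence $x + c \in \Delta$. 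This proves $\Delta + C \subseteq \Delta$, so $\Delta + C = \Delta$ and $\Delta$ is $C$-convex.

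I do not anticipate a genuine obstacle here; the only points requiring a little care are that $C$ is additively closed (which is exactly convexity together with the cone property) and that the hypothesis $c \neq 0$ is precisely what makes the norm grow along the ray. Note that strict convexity of $C$ and the nonemptiness of its interior play no role in this lemma.
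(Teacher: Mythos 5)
Your proof is correct and is essentially the paper's argument: both rely on the observation that the ray from $x$ in the direction $c$ stays in $C$, must eventually leave the bounded set $C\setminus\Delta$ and hence land in $\Delta$, after which convexity of $\Delta$ forces $x+c\in\Delta$. The only difference is cosmetic — you argue directly via an explicit convex combination, while the paper phrases it as a contradiction ("$x+y$ separates two points of $L\cap\Delta$").
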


\begin{proof}
Since $0\in C$, we have $\Delta+C\supset\Delta$.
It remains to prove that $\Delta+C\subset\Delta$.
Assume the contrary: there are points $x\in\Delta$ and $y\in C$ such that
$x+y\not\in\Delta$.
Consider the line $L$ passing through the points $x$ and $x+y$.
Since $C\setminus\Delta$ is bounded, there are points of $\Delta$ in $L$
far enough in the direction from $x$ to $x+y$.
Thus $x+y$ separates two points of $L\cap\Delta$ in $L$.
A contradiction with the convexity of $\Delta$.
\end{proof}

Let $\xi$ be a linear functional on $\R^d$ such that $\xi\ge 0$ on $C$
and $\xi^{-1}(0)\cap C=\{0\}$.
Recall that, for any subset $A\subset C$, we let $A_t$ denote the set of all points
$a\in A$ such that $\xi(a)\le t$.

\begin{lem}
\label{l:sum}
  Let $\Delta\subset C$ be a closed subset such that $C\setminus\Delta$ is bounded.
  For all sufficiently large $t>0$ and all $s>0$, we have
  $\Delta_t+C_s=\Delta_{t+s}$.
\end{lem}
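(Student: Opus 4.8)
The plan is to establish the two inclusions $\Delta_t + C_s \subseteq \Delta_{t+s}$ and $\Delta_{t+s} \subseteq \Delta_t + C_s$ separately, with the second being the substantive one. For the first inclusion, take $a \in \Delta_t$ and $c \in C_s$. By Lemma \ref{l:Cconv}, $\Delta$ is $C$-convex, so $a + c \in \Delta + C = \Delta$; and since $\xi$ is linear and nonnegative on $C$, we get $\xi(a+c) = \xi(a) + \xi(c) \le t + s$, hence $a + c \in \Delta_{t+s}$. This direction holds for \emph{all} $t$, not just large ones, and the hypothesis that $C \sm \Delta$ is bounded is only needed (via Lemma \ref{l:Cconv}) to know $\Delta + C = \Delta$.

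For the reverse inclusion, the idea is: given $x \in \Delta_{t+s}$, I want to write $x = a + c$ with $a \in \Delta$, $\xi(a) \le t$, $c \in C$, $\xi(c) \le s$. The natural candidate is to slide $x$ back toward the origin along a ray into $C$. Concretely, fix a reference point $p$ in the interior of $C$ with, say, $\xi(p) = 1$; I will look for $c = \lambda p$ and check that $a = x - \lambda p$ lands in $\Delta$ for an appropriate $\lambda \in [0, s]$. The key geometric input is that $C \sm \Delta$ is bounded: there is a radius $R$ with $C \sm \Delta \subseteq B_R(0)$. Since $t$ is ``sufficiently large,'' I may assume $t$ is big enough that $\Delta_t \supseteq C_t \cap B_R(0)$ — more precisely, that $\{\xi \le t\} \cap C$ already contains the bounded set $C \sm \Delta$, so that on the slab $\{\xi \le t\}$ the sets $\Delta$ and $C$ agree. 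Then for $x \in \Delta_{t+s} \subseteq C$, consider the segment from $x$ in the direction $-p$: as $\lambda$ increases from $0$, the point $x - \lambda p$ stays in $C$ at least until $\xi$ drops to a small value (since $\xi(x - \lambda p) = \xi(x) - \lambda$ and $C$ is a cone with $p$ interior — one should check the segment stays in $C$ for $\lambda$ up to roughly $\xi(x)$, which follows from strict convexity of $C$ and $p \in \Int C$). Choosing $\lambda = \min(s, \max(0, \xi(x) - t))$ gives $\xi(x - \lambda p) \le t$, so $x - \lambda p \in C_t = \Delta_t$ (using that $\Delta$ and $C$ coincide on the slab $\{\xi \le t\}$ for $t$ large), and $c = \lambda p \in C_s$ since $\xi(c) = \lambda \le s$. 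Thus $x = a + c$ with $a \in \Delta_t$, $c \in C_s$.

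The main obstacle I expect is the bookkeeping in the reverse inclusion: verifying that the segment $x - \lambda p$ genuinely remains inside the cone $C$ for all $\lambda$ in the relevant range, and pinning down exactly how large ``sufficiently large $t$'' must be (it should be any $t$ with $C \sm \Delta \subseteq \{\xi \le t\}$, equivalently $t \ge \sup\{\xi(y) : y \in C \sm \Delta\}$, which is finite by boundedness). One subtlety worth isolating as a sub-claim is: if $t$ is large enough that $C \sm \Delta \subseteq \{\xi < t\}$, then $C_t = \Delta_t$; this reduces the reverse inclusion to showing $C_{t+s} \subseteq C_t + C_s$, which is a clean statement about the cone alone and can be proved by the same sliding argument, now entirely within $C$, using only that $C$ is a convex cone with nonempty interior. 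Everything else is routine once this is in place.
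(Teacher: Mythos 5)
Your forward inclusion $\Delta_t+C_s\subseteq\Delta_{t+s}$ is correct and is exactly the paper's argument (via Lemma \ref{l:Cconv}). The reverse inclusion, however, has a genuine gap, in fact two. First, the decomposition $x=(x-\lambda p)+\lambda p$ with a \emph{fixed} interior direction $p$ does not work: for $x$ on the boundary of the strictly convex cone $C$, the point $x-\lambda p$ need not lie in $C$ for any $\lambda>0$. Take $C=\{(u,v):v\ge|u|\}\subset\R^2$, $\xi(u,v)=v$, $p=(0,1)$, and $x=(N,N)$ on the extreme ray: then $x-\lambda p=(N,N-\lambda)\notin C$ for every $\lambda>0$. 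So the "sliding" step, which you yourself flag as the main obstacle, actually fails, and strict convexity of $C$ plus $p\in\Int C$ do not rescue it. Second, your sub-claim that for large $t$ one has $C_t=\Delta_t$ ("on the slab $\{\xi\le t\}$ the sets $\Delta$ and $C$ agree") is backwards: choosing $t$ with $C\sm\Delta\subseteq\{\xi<t\}$ puts the entire nonempty set $C\sm\Delta$ \emph{inside} $C_t$ and outside $\Delta_t$, so $C_t\supsetneq\Delta_t$; the two sets agree on $\{\xi\ge t\}$, not on $\{\xi\le t\}$. Consequently the proposed reduction to $C_{t+s}\subseteq C_t+C_s$ does not suffice either: a decomposition $z=a+c$ with $a\in C_t$ could land $a$ in $C\sm\Delta$.

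The paper repairs both defects at once by decomposing \emph{radially} rather than along a fixed direction: for $z\in\Delta_{t+s}$ with $\xi(z)>t$, set $\lambda=t/\xi(z)$ and write $z=\lambda z+(1-\lambda)z$. Both summands are nonnegative multiples of $z$, hence automatically in $C$; the first has $\xi$-value exactly $t$, so (since every point of $C\sm\Delta$ has $\xi<t$) it lies in $\Delta_t$; the second has $\xi$-value $\xi(z)-t\le s$, so it lies in $C_s$. The case $\xi(z)\le t$ is handled by $z=z+0$. If you want to keep your two-step structure, replace the translation $x\mapsto x-\lambda p$ by the dilation $x\mapsto\lambda x$ and track the $\xi$-value of the first summand (it must land in $[t_0,t]$, where $t_0=\sup\{\xi(y):y\in C\sm\Delta\}$) instead of invoking the false identity $C_t=\Delta_t$.
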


\begin{proof}
  If $x\in\Delta_t$ and $y\in C_s$, then $x+y\in\Delta$ by $C$-convexity
  (Lemma \ref{l:Cconv})
  and $\xi(x+y)\le t+s$ since $\xi(x)\le t$ and $\xi(y)\le s$.
  On the other hand, take $z\in\Delta_{t+s}$ and consider two cases.

\emph{Case 1}: we have $\xi(z)\le t$. Then $z\in\Delta_t$, and, setting $x=z$, $y=0$,
we obtain that $z=x+y$, $x\in\Delta_t$ and $y\in C_s$.

\emph{Case 2:} we have $\xi(z)>t$. If $\lambda=t/\xi(z)$, then $\xi(\lambda z)=t$.
We now set $x=\lambda z$, $y=(1-\lambda)z$.
The number $t$ is sufficiently large; thus we may assume that
all values of $\xi$ on $C\setminus\Delta$ are less than $t$.
Then $x\in\Delta_t$ and $y\in C_s$.
\end{proof}

\begin{prop}
\label{p:chi}
  For all sufficiently large $t$, we have
  $$
  \1_{\Delta_t}*\1_{C_t}^{-1}=\1_{\Delta_t}-\1_{C_t}+\1_{\{0\}}.
  $$
\end{prop}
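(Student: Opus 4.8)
The plan is to compute the Minkowski product $\1_{\Delta_t}*\1_{C_t}^{-1}$ directly using the convolution formula, exploiting the additive structure identified in Lemma~\ref{l:sum}. First I would recall from the discussion following Lemma~\ref{l:mink-prod} that $\1_{C_t}^{-1}$ equals $(-1)^d$ on the interior of $-C_t = \{-x\mid x\in C_t\}$ and $0$ elsewhere. Since $C_t = C\cap\{\xi\le t\}$ is a $d$-dimensional convex polytope (a truncated cone), its interior is $\Int(C)\cap\{\xi<t\}$, so $\1_{C_t}^{-1}(x) = (-1)^d$ precisely when $-x\in\Int(C)$ and $\xi(-x)<t$, i.e. when $x\in -\Int(C)$ and $\xi(x)>-t$.

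The key step is to evaluate $\varphi(x) := (\1_{\Delta_t}*\1_{C_t}^{-1})(x) = \int \1_{\Delta_t}(y)\,\1_{C_t}^{-1}(x-y)\,d\chi(y)$ for $t$ large, pointwise in $x$. The integrand is $(-1)^d$ on the set $S(x)$ of all $y$ with $y\in\Delta_t$ and $x-y\in -C_t$, equivalently $y\in\Delta_t \cap (x+C_t)$. I would break this into cases according to where $x$ lies relative to $\Delta_t$, $C_t$, and $\{0\}$. When $x$ is deep inside $\Delta_t$ (well away from the boundary $\xi=t$ and from $C\setminus\Delta$), the set $S(x)$ should be a full-dimensional polytope whose $\chi$-measure is $1$, contributing... wait — I need the free term to come out right, so the more robust approach is: use that $\chi$ is multiplicative for $*$ only on bounded sets, and instead verify the claimed identity by multiplying both sides by $\1_{C_t}$. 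Concretely, it suffices to check
$$
\1_{\Delta_t} = \bigl(\1_{\Delta_t}-\1_{C_t}+\1_{\{0\}}\bigr)*\1_{C_t}.
$$
Expanding the right side by bilinearity gives $\1_{\Delta_t}*\1_{C_t} - \1_{C_t}*\1_{C_t} + \1_{C_t}$. Now $\1_{C_t}*\1_{C_t}$ is \emph{not} $\1_{C_t+C_t}=\1_{C_{2t}}$ in general because $C_t$ is a polytope and $C_t+C_t\ne$ a scaling — but by Lemma~\ref{l:mink-prod} and Lemma~\ref{l:sum} applied with $\Delta=C$ (so $\Delta_t=C_t$), for $t$ large we do have $C_t+C_s=C_{t+s}$, hence $\1_{C_t}*\1_{C_t}=\1_{C_{2t}}$. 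Similarly $\Delta_t+C_t = \Delta_{2t}$ for $t$ large by Lemma~\ref{l:sum}, so $\1_{\Delta_t}*\1_{C_t}=\1_{\Delta_{2t}}$ by Lemma~\ref{l:mink-prod}. Thus the right side becomes $\1_{\Delta_{2t}}-\1_{C_{2t}}+\1_{C_t}$.

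It remains to see that $\1_{\Delta_{2t}}-\1_{C_{2t}}+\1_{C_t} = \1_{\Delta_t}$, i.e. $\1_{\Delta_{2t}}-\1_{\Delta_t} = \1_{C_{2t}}-\1_{C_t}$. Both sides are indicator functions of the "slab" $\{t<\xi\le 2t\}$ intersected with $\Delta$ and with $C$ respectively; since $t$ is large enough that $C\setminus\Delta\subset\{\xi\le t\}$, on the region $\{\xi>t\}$ the sets $\Delta$ and $C$ coincide, so $\Delta_{2t}\setminus\Delta_t = C_{2t}\setminus C_t$, giving the equality. Finally, since $\1_{C_t}$ is invertible in $Z_c(\R^d)$, we may multiply the verified identity $\1_{\Delta_t} = (\1_{\Delta_t}-\1_{C_t}+\1_{\{0\}})*\1_{C_t}$ on both sides by $\1_{C_t}^{-1}$ to conclude. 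The main obstacle is bookkeeping the "for all sufficiently large $t$" hypotheses so that all three applications of Lemma~\ref{l:sum} (to $\Delta_t+C_t$, to $C_t+C_t$, and implicitly the condition $C\setminus\Delta\subset\{\xi\le t\}$) hold simultaneously; this is routine since each requires only $t$ above some threshold.
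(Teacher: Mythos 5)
Your proof is correct and takes essentially the same route as the paper: multiply by $\1_{C_t}$, use Lemma~\ref{l:sum} to get $\Delta_t+C_t=\Delta_{2t}$ and $C_t+C_t=C_{2t}$, and note that $\1_{\Delta_{2t}}-\1_{C_{2t}}=\1_{\Delta_t}-\1_{C_t}$ because for large $t$ both sides equal minus the indicator function of $C\setminus\Delta$. The abandoned direct-convolution computation at the start is unnecessary but harmless.
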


\begin{proof}
It suffices to compute
$\1_{C_t}*(\1_{\Delta_t}-\1_{C_t}+\1_{\{0\}})$ and verify that it is equal to $\1_{\Delta_t}$.
Opening the parentheses in the former expression and using Lemma \ref{l:sum}, we obtain
$$
\1_{C_t+\Delta_t}-\1_{C_t+C_t}+\1_{C_t}=\1_{\Delta_{2t}}-\1_{C_{2t}}+\1_{C_t}.
$$
Since $t$ is sufficiently large, we have
$\1_{\Delta_{2t}}-\1_{C_{2t}}=\1_{\Delta_t}-\1_{C_t}$
(both sides are equal to the minus indicator function of $C\setminus\Delta$),
and we obtain the desired.
\end{proof}

Let $\Delta$ be a closed convex subset of $C$ such that $C\sm\Delta$ is bounded.
Consider the corresponding $C$-coconvex body $A=C\sm (\Delta\cup\{0\})$.
Then, for all sufficiently large $t$, we have
$$
-\1_{A}=\1_{\Delta_t}-\1_{C_t}+\1_{\{0\}},
$$
in particular, the right-hand side is independent of $t$, provided that $t$ is large enough.
It follows from Proposition \ref{p:chi} that $-\1_A$ is a virtual convex polytope
$\1_{\Delta_t}*\1_{C_t}^{-1}$.
Moreover, the same proposition implies that
$-\1_{(A\oplus B)}=(-\1_{A})*(-\1_{B})$.
Thus $A\mapsto -\1_{A}$ is a homomorphism from the semi-group of $C$-coconvex bodies
to the multiplicative group of virtual polytopes.
This completes the proof of Theorem B.

\subsection{Proof of Corollaries \ref{c:poly} and \ref{c:Ehrhart}}
Corollary \ref{c:poly} follows directly from Theorem B and the theorem of \cite{KPa}
that a polynomial valuation on integer convex polytopes defines a
polynomial function on the group of virtual integer polytopes.
Corollary \ref{c:Ehrhart} follows from Theorem B and from
Proposition \ref{p:inv} stated below.

For a $C$-convex polytope $A$ as above, we let $A^\bullet$ denote
the intersection of the closure of $A$ with the interior of $C$.

\begin{prop}
  \label{p:inv}
  The inverse of the virtual polytope $-\1_{A}$ in the multiplicative
  group of virtual polytopes is equal to
  $$
  (-1)^{d-1}\1_{A^\bullet}\circ\sigma+\1_{\{0\}}.
  $$
  Here $\sigma$ is the antipodal map taking $v$ to $-v$.
\end{prop}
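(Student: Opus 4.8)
The plan is to compute the inverse of $-\1_A$ explicitly, starting from Theorem~B and the description of inverse indicator functions recalled in the introduction. By Theorem~B, for all sufficiently large $t$ one has $-\1_A=\1_{\Delta_t}*\1_{C_t}^{-1}$, so that in the group of virtual polytopes
$$
(-\1_A)^{-1}=\1_{C_t}*\1_{\Delta_t}^{-1}.
$$
For $t$ large, $\Delta_t$ and $C_t$ are $d$-dimensional convex polytopes, and the explicit formula for the inverse of the indicator function of a $d$-polytope gives $\1_{\Delta_t}^{-1}=(-1)^d\,\1_{\Int\Delta_t}\circ\sigma$ and $\1_{C_t}^{-1}=(-1)^d\,\1_{\Int C_t}\circ\sigma$. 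So the proposition will follow once I establish the geometric identity
$$
A^\bullet=\Int C_t\sm\Int\Delta_t \qquad (t\gg 0),
$$
together with the algebraic identity
$$
\1_{C_t}*\1_{\Delta_t}^{-1}=\1_{\Delta_t}^{-1}-\1_{C_t}^{-1}+\1_{\{0\}} .
$$
Indeed, since $\Int\Delta_t\subset\Int C_t$, the first identity gives $\1_{A^\bullet}=\1_{\Int C_t}-\1_{\Int\Delta_t}$, hence $(-1)^{d-1}\1_{A^\bullet}\circ\sigma+\1_{\{0\}}=\1_{\Delta_t}^{-1}-\1_{C_t}^{-1}+\1_{\{0\}}$, which by the second identity equals $(-\1_A)^{-1}$.

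I expect the geometric identity to be the only real difficulty. Because $\ol A=\ol{C\sm\Delta}$ is bounded, for $t$ larger than $\sup_{\ol A}\xi$ one has $A^\bullet\subset\{\xi<t\}$, $\Int C_t=\Int C\cap\{\xi<t\}$, and $\Int\Delta_t$ agrees with $\Int\Delta$ on $\{\xi<t\}$; so the identity reduces to showing that a point $x\in\Int C_t$ belongs to $\ol{C\sm\Delta}$ precisely when $x\notin\Int\Delta$. One direction is clear, since a point of $\Int\Delta$ has a neighbourhood inside $\Delta$. For the other direction: if $x\notin\Delta$ then $x\in C\sm\Delta$, and if $x\in\partial\Delta$ I would use convexity of $\Delta$ — a supporting hyperplane of $\Delta$ at $x$ gives a direction along which $x$ is a limit of points lying outside $\Delta$, and these points lie in $C$ for small displacements since $x\in\Int C$ and $\Int C$ is open; hence $x\in\ol{C\sm\Delta}$. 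The remaining bookkeeping is to choose $t$ large enough that Lemma~\ref{l:sum}, Proposition~\ref{p:chi} and the truncation arguments above all hold simultaneously.

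The algebraic identity is a short computation with Proposition~\ref{p:chi}: multiplying its right-hand side by $\1_{\Delta_t}$ and using $\1_{\Delta_t}*\1_{\Delta_t}^{-1}=\1_{\{0\}}$ and $\1_{\Delta_t}*\1_{C_t}^{-1}=\1_{\Delta_t}-\1_{C_t}+\1_{\{0\}}$ yields $\1_{C_t}$, which is also the result of multiplying its left-hand side by $\1_{\Delta_t}$; invertibility of $\1_{\Delta_t}$ then gives the identity. Alternatively, one can bypass it by recalling that inversion in the group of virtual polytopes is realised by $\gamma\mapsto\star(\gamma)\circ\sigma$ for the linear operator $\star$ of \cite{KPa} with $\star(\1_P)=(-1)^d\1_{\Int P}$ for $d$-dimensional polytopes $P$; applying $\star$ termwise to $-\1_A=\1_{\Delta_t}-\1_{C_t}+\1_{\{0\}}$ and invoking the geometric identity yields the formula at once.
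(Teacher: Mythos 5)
Your argument is correct and essentially the paper's: the paper applies the inversion operator $\star(\cdot)\circ\sigma$ termwise to $-\1_A=\1_{\Delta_t}-\1_{C_t}+\1_{\{0\}}$, which is exactly the ``alternative'' you sketch at the end, and your main route via $(-\1_A)^{-1}=\1_{C_t}*\1_{\Delta_t}^{-1}=\1_{\Delta_t}^{-1}-\1_{C_t}^{-1}+\1_{\{0\}}$ is the same computation in multiplicative form. The only substantive ingredient in either version is the geometric identity $\1_{A^\bullet}=\1_{\Int C_t}-\1_{\Int\Delta_t}$, which the paper asserts without comment and you verify correctly with the supporting-hyperplane argument.
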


\begin{proof}
   For a subset $X\subset\R^d$, we let $\Int(X)$ denote the interior of $X$.
  According to \cite{KPa}, the inverse of a virtual convex polytope $\varphi$
  is equal to $\star(\varphi)\circ\sigma$, where the additive group homomorphism
  $\star:Z_c(\Z^d)\to Z_c(\Z^d)$ is defined uniquely by the property that
  $\star(\1_B)=(-1)^d\1_{\Int(B)}$ for every integer convex polytope $B$
  with nonempty interior.
  Setting $\varphi=-\1_{A}$, we obtain that
  $$
  \star(\varphi)=\star(\1_{\Delta_t}-\1_{C_t}+\1_{\{0\}})=
  (-1)^{d}(\1_{\Int(\Delta^t)}-\1_{\Int(C_t)})+\1_{\{0\}}=
  (-1)^{d-1}\1_{A^\bullet}+\1_{\{0\}}.
  $$
  The desired claim follows.
\end{proof}

\subsection{Proof of Theorem C}
Let $C$, $A$, $\Delta$ and $\xi$ be as above.
We assume that $C$ is an integer cone and that $\Delta$ has integer vertices.
Let $t>0$ be sufficiently large, so that $A$ is contained in $C_{t'}$ for some $t'<t$.

Clearly, there is a homomorphism $\Phi$ from the additive group $Z(\Z^d)$
to the additive group of rational functions of $x_1$, $\dots$, $x_d$ such that
$\Phi(\1_Q)=G(Q)$ for every integer cone $Q$.
Using the expression
$\1_{A}=-\1_{\Delta_t}+\1_{C_t}-\1_{\{0\}}$,
we obtain that
$$
G(A)=-G(\Delta_t)+G(C_t)-1.
$$
Apply Brion's theorem to $G(\Delta_t)$ and $G(C_t)$.
Near all vertices of $C_t$, except $0$, the polytopes $\Delta_t$ and $C_t$ coincide.
Therefore, the terms associated with these vertices cancel each other.
What remains are terms associated with vertices of $\Delta_t$ that
are simultaneously vertices of $A$ and the term with the vertex $0$ of $C_t$
(which is equal to $G(C)$).
Thus we have
$$
G(A)=-\sum_{a\in\vrt(A)\sm\{0\}} x^a G(C'_a)+G(C)-1,
$$
as desired.

\subsection{A viewpoint on Theorem A through virtual polytopes}
We will now interpret Theorem A in more conceptual terms involving Theorem B.
The function $\Vol$ assigning to every convex polytope its volume extends to
a valuation on convex polytopes, i.e., an additive group homomorphism
$\mu:Z_c(\R^d)\to\R$ such that $\mu(\1_{\Delta})=\Vol(\Delta)$ for every convex
polytope $\Delta$.
By \cite{KPb}, the restriction of $\mu$ to the multiplicative group $Z^*_c(\R^d)$
of virtual convex polytopes is a homogeneous degree $d$ polynomial.
It follows that there is a unique symmetric map
$$
\mu:Z^*_c(\R^d)\times\cdots\times Z^*_c(\R^d)\to\R
$$
(slightly abusing the notation, we use the same letter $\mu$ to denote this map)
with the property that
$\mu(\alpha,\dots,\alpha)=\mu(\alpha)$
for every $\alpha\in Z_c^*(\R^d)$ and that
$$
\mu(\alpha_1*\beta,\alpha_2,\dots,\alpha_d)=\mu(\alpha_1,\alpha_2,\dots,\alpha_d)+
\mu(\beta,\alpha_2,\dots,\alpha_d)
$$
for every $\alpha_1$, $\dots$, $\alpha_d$, $\beta\in Z^*_c(\R^d)$.
The number $\mu(\alpha_1,\dots,\alpha_d)$ is called the \emph{mixed volume
of virtual convex polytopes $\alpha_1$, $\dots$, $\alpha_d$}.
This is justified by the observation that if $\alpha_i=\1_{\Delta^{(i)}}$
for some convex polytopes $\Delta^{(i)}$, then the mixed volume of
$\alpha_i$s is equal to the mixed volume of $\Delta^{(i)}$s.
It is well known that the function $\mu$ is continuous with respect
to a natural topology on $Z^*_c(\R^d)$, in which $\1_{\Delta^{(n)}}\to\1_{\Delta}$
if convex polytopes $\Delta^{(n)}$ converge to a convex polytope
$\Delta$ in the Hausdorff metric.

Let $C$ be a closed strictly convex cone with the apex at $0$ and a nonempty interior.
Take a linear functional $\xi$ such that $\xi\ge 0$ on $C$ and $\xi^{-1}(0)\cap C=\{0\}$.
Suppose that a convex subset $\Delta\subset C$ is such that $C\sm\Delta$ lies in
the half-plane $\xi< t_0$.
We have
$$
\pd{}t\mu(\1_{\Delta_t},\dots,\1_{\Delta_t})|_{t=t_0}=
\frac d{t_0T}\left(\mu(\1_{\Delta_{t_0+t_0T}},
\1_{\Delta_{t_0}},\dots,\1_{\Delta_{t_0}})-\mu(\1_{\Delta_{t_0}},\1_{\Delta_{t_0}}\dots,
\1_{\Delta_{t_0}})\right)
$$
for arbitrary $T$.
We now let $T$ diverge to infinity, and observe that the
convex polytope $\frac 1T\Delta_{t_0(T+1)}$ converges to $C_{t_0}$.
Thus we obtain that
$$
\pd{}t\mu(\1_{\Delta_t},\dots,\1_{\Delta_t})|_{t=t_0}=
\frac d{t_0}\mu(\1_{C_{t_0}},
\1_{\Delta_{t_0}},\dots,\1_{\Delta_{t_0}}).
$$
It follows that the left-hand side does not depend on the geometry of
$\Delta$, it only depends on $C$ and on $t_0$.
In particular, the $d$ copies of $\Delta$ in the left-hand side
can be replaced with $d$ different closed convex sets
$\Delta^{(1)}$, $\dots$, $\Delta^{(d)}$ such that
the complements $C\sm\Delta^{(i)}$ are contained in the set $\{\xi<t_0\}$:
\begin{equation}
\label{e:indep}
\mu(\1_{C_{t_0}},\1_{\Delta^{(2)}_{t_0}},\dots,\1_{\Delta^{(d)}_{t_0}})=
\mu(\1_{C_{t_0}},\1_{\Delta_{t_0}},\dots,\1_{\Delta_{t_0}})
\end{equation}
As the convex set $\Delta^{(2)}_{t_0}$ degenerates to the cone $C_{t_0}$, e.g. through the
family $\frac 1T\Delta^{(2)}_{Tt_0}$, where $T\to\infty$, we obtain
that
\begin{equation*}
\mu(\1_{C_{t_0}},\1_{\Delta^{(2)}_{t_0}},\1_{\Delta^{(3)}_{t_0}},\dots,\1_{\Delta^{(d)}_{t_0}})=
\mu(\1_{C_{t_0}},\1_{C_{t_0}},\1_{\Delta^{(3)}_{t_0}},\dots,\1_{\Delta^{(d)}_{t_0}})
\end{equation*}
or, subtracting the right-hand side from the left-hand side,
\begin{equation}
\label{e:CA1}
  \mu(\1_{C_{t_0}},\1_{\Delta^{(2)}_{t_0}}*\1_{C_{t_0}}^{-1},
  \1_{\Delta^{(3)}_{t_0}},\dots,\1_{\Delta^{(d)}_{t_0}})=0.
\end{equation}

Set $A^{(i)}=C\sm(\Delta^{(i)}\cup \{0\})$ to be the $C$-coconvex polytopes
corresponding to $\Delta^{(i)}$.
Recalling that $-\1_{A^{(i)}}=\1_{\Delta^{(i)}_{t_0}}*\1_{C_{t_0}}^{-1}$, we can rewrite
equation (\ref{e:CA1}) as
\begin{equation}
\label{e:CA2}
  \mu(\1_{C_{t_0}},-\1_{A^{(i)}},\dots)=0.
\end{equation}
Here dots replace any sequence of $d-2$ virtual convex polytopes from the
group generated by $\1_{\Delta^{(i)}}$.

Taking $\Delta$ as above and setting $A=C\sm(\Delta\cup\{0\})$, we obtain that
$$
\mu(-\1_A)=-\mu(\1_A)=-\mu(\1_{C_{t_0}})+\mu(\1_{\Delta_{t_0}})
$$
from the additivity of volume.
Passing to the mixed volumes, we obtain that
\begin{equation}
\label{e:mixed1}
  \mu(-\1_{A^{(1)}},\dots,-\1_{A^{(d)}})=-\mu(\1_{C_{t_0}})+
  \mu(\1_{\Delta^{(1)}_{t_0}},\dots,\1_{\Delta^{(d)}_{t_0}}).
\end{equation}
Setting $\Delta=C$ in (\ref{e:indep}) and substituting into (\ref{e:mixed1}),
we obtain that
\begin{equation}
  \mu(-\1_{A^{(1)}},-\1_{A^{(2)}},\dots,-\1_{A^{(d)}})=
  \mu(-\1_{A^{(1)}},\1_{\Delta^{(2)}_{t_0}}\dots,\1_{\Delta^{(d)}_{t_0}}).
\end{equation}
From the right-hand side, we can subtract
$\mu(-\1_{A^{(1)}},\1_{C_{t_0}},\1_{\Delta^{(3)}_{t_0}},\dots,\1_{\Delta^{(d)}_{t_0}})$
(which is equal to zero by (\ref{e:CA2})) to obtain that
\begin{equation}
\label{e:AFform}
  \mu(-\1_{A^{(1)}},-\1_{A^{(2)}},-\1_{A^{(3)}},\dots,-\1_{A^{(d)}})=
\mu(-\1_{A^{(1)}},-\1_{A^{(2)}},\1_{\Delta^{(3)}_{t_0}},\dots,\1_{\Delta^{(d)}_{t_0}}).
\end{equation}
Fix the sets $\Delta^{(i)}$ with $i\ge 3$.
Then the left-hand side of (\ref{e:AFform}) can be viewed (up to a sign)
as the coconvex Aleksandrov--Fenchel from.
In fact, it is equal to the minus mixed volume of $C$-coconvex bodies $A^{(i)}$.
On the other hand, the right-hand side is a usual convex Aleksandrov--Fenchel
form associated with convex polytopes $\Delta^{(i)}_{t_0}$, $i\ge 3$, evaluated
at virtual polytopes $-\1_{A^{(1)}}$ and $-\1_{A^{(2)}}$.
Let $B$ denote this convex Aleksandrov--Fenchel form.
It follows from the convex Aleksandrov--Fenchel inequality that $B(\alpha,\alpha)\le 0$
provided that $\alpha\in Z_c^*(\R^d)$ is orthogonal to some convex polytope $\Delta$
in the sense that $B(\alpha,\1_{\Delta})=0$.
But $-\1_{A^{(1)}}$ is orthogonal to $C_{t_0}$ by (\ref{e:CA2})!
It follows that $B(-\1_{A^{(1)}},-\1_{A^{(1)}})\le 0$.
This implies the coconvex Aleksandrov--Fenchel inequality.

Thus we obtained another proof of Theorem A.
Although this proof is no simpler than the one given in Section \ref{ss:AF},
it reveals the role of virtual convex polytopes and the fact that
the coconvex Aleksandrov--Fenchel form is no different from the
convex Aleksandrov--Fenchel form evaluated at certain virtual polytopes.

\begin{rem}
  We now sketch an analogy, which can be easily formalized and which may shed some
  light to the argument presented above.
  Consider complex algebraic varieties $X$, $Y$ and a regular map $f:X\to Y$.
  Fix a point $y_0\in Y$, and assume that $f:X\sm f^{-1}(y_0)\to Y\sm\{y_0\}$
  is an isomorphism. 
  A Cartier divisor $D$ in $X$ is said to be \emph{sub-exceptional} if
  the support of $D$ maps to $y_0$ under $f$.
  Sub-exceptional divisors in $X$ correspond to coconvex polytopes
  (under the analogy, which we are discussing).
  A Cartier divisor $D$ in $X$ is said to be \emph{off-exceptional}
  if $D=f^*(\widetilde D)$ for some Cartier divisor $\widetilde D$ in $Y$,
  whose support does not contain $y_0$.
  Off-exceptional divisors correspond to the cone $C$ (we are looking
  at the local geometry of $Y$ near $y_0$, thus we do not distinguish between
  different off-exceptional divisors).
  Let $S$ and $O$ be a sub-exceptional and an off-exceptional divisors, respectively.
  If $[S]$ and $[O]$ stand for the classes of these divisors in the Chow ring,
  then obviously $[S]\cdot [O]=0$, since the supports of $S$ and $O$ are disjoint. 
  This fact is analogous to equation (\ref{e:CA2}).
  As is shown in \cite{Kh}, the Hodge index theorem implies the 
  Aleksandrov--Fenchel inequalities for the intersection of divisors in $X$
  (under some natural assumptions on $X$, e.g., when $X$ is projective and smooth).
  These inequalities can be used to provide an analog of Theorem A for the 
  intersection form on sub-exceptional divisors. 
  To make the described analogy into a precise correspondence, one 
  takes $X$ and $Y$ to be toric varieties associated with integer polytopes
  $\Delta_{t_0}$ and $C_{t_0}$. 
\end{rem}


\begin{thebibliography}{99999}
\bibitem[Al]{Al}
A.D. Aleksandrov, {\em To the theory of mixed volumes of convex bodies II.
New inequalities between mixed volumes and their applications''} (Russian),
Matem. Sb., \textbf{2} (1937), 6, pp. 1205--1238

\bibitem[AVG]{AVG}
V. Arnold, A. Varchenko, S. Gusein--Zade,
\emph{Singularities of Differentiable Maps},
Springer (1985)

\bibitem[B]{B}
D. Bernstein, \emph{The number of roots of a system of equations},
Functional Analysis and Its Applications \textbf{9} (1975), 3, pp. 183--185

\bibitem[BS]{BS}
M. Beck, S. Robins, \emph{Computing the Continuous Discretely},
Springer (2009)

\bibitem[Br]{Br}
M. Brion, \emph{Points entiers dans les poly\`edres convexes}. Ann. Sci. \'Ecole
Norm. Sup. (4), \textbf{21}:4 (1988), pp. 653--663

\bibitem[Fi]{Fi}
F. Fillastre, \emph{Fuchsian convex bodies: basics of Brunn--Minkowski theory},
Geom. Funct. Anal., \textbf{23}:1 (2013), pp. 295--333

\bibitem[Kh]{Kh}
A. Khovanskii, \emph{Algebra and mixed volumes},
in: Yu.D. Burago, V.A. Zalgaller, \emph{Geometric inequalities}, Springer (2010)

\bibitem[Ko]{Ko}
A. Kouchnirenko, \emph{Newton polyhedron and Milnor numbers},
Functional Analysis and Its Applications \textbf{9} (1975), 1, pp. 71--72

\bibitem[KKa]{KKa}
K. Kaveh, A.G. Khovanski\u{\i},
\emph{Newton--Okounkov bodies, semigroups of integral points, 
graded algebras and intersection theory}, Ann. Math. \textbf{176} (2012), Issue 2, pp. 925--978.

\bibitem[KKb]{KK}
K. Kaveh, A.G. Khovanski\u{\i},
\emph{Convex bodies and multiplicities of ideals}, preprint (2013),
to appear in the Proceedings of the Steklov Institute of Mathematics.

\bibitem[KPa]{KPa}
A. Khovanski\u{\i}, A. Pukhlikov,
\emph{Finitely additive measures of virtual polytopes}.
St. Petersburg Math. J. \textbf{4}:2 (1993), pp. 337--356

\bibitem[KPb]{KPb}
A. Khovanski\u{\i}, A. Pukhlikov,
\emph{The RiemannÂRoch theorem for integrals and sums of quasipolynomials on virtual polytopes}.
St. Petersburg Math. J., \textbf{4}:4 (1993), pp. 789--812	

\bibitem[KT]{KT}
A. Khovanski\u{\i}, V. Timorin,
\emph{Aleksandrov--Fenchel inequality for coconvex bodies}, preprint
\verb!arXiv:1305.4484!

\bibitem[KV]{KV}
A. Khovansky, A. Varchenko,
\emph{Asymptotics of Integrals Over Vanishing Cycles and the Newton Polyhedron},
Soviet Math. Doklady \textbf{32} (1985), pp. 122--127.

\bibitem[Mc77]{Mc77}
P. McMullen,
\emph{Valuations and Euler-type relations on certain classes of convex polytopes},
Proc. London Math. Soc., Ser. 3, \textbf{35}, 1 (1977), pp. 113--135

\bibitem[Mc93]{Mc}
P. McMullen, {\em On simple polytopes}, Invent. math. \textbf{113} (1993),
pp. 419--444

\bibitem[T]{T}
B. Teissier, \emph{Du th\'eor\`eme de lindex de Hodge aux in\'egalit\'es isop\'erim\'etriques}.
C. R. Acad. Sci. Paris S\'er. A, \textbf{288} (1979), 4, 287--289.

\bibitem[VT]{VT}
V. Timorin,
{\em An analogue of the Hodge-Riemann relations for simple polytopes},
Russian Mathematical Surveys, \textbf{54}, No.2 (1999), pp. 381--426
\end{thebibliography}
\end{document}